\theoremstyle{definition}
\newtheorem{definition}{Definition}[section]
\theoremstyle{plain}
\newtheorem*{D*}{Definition}
\newtheorem*{theorem*}{Theorem}
\newtheorem*{prop*}{Proposition}
\newtheorem{thmA}{Theorem}
\newtheorem{corA}[thmA]{Corollary}
\newtheorem{theorem}{Theorem}[section]
\newtheorem{lemma}[theorem]{Lemma}
\newtheorem{cor}[theorem]{Corollary}
\newtheorem{proposition}[theorem]{Proposition}
\theoremstyle{definition}
\newtheorem{remark}[theorem]{Remark}
\newcommand{\R}{\ensuremath{\mathbb R}}
\newcommand{\N}{\ensuremath{\mathbb N}}
\newcommand{\s}{\ensuremath{\mathbb S}}
\newcommand{\eps}{\ensuremath{\varepsilon}}
\newcommand{\meas}{\mathfrak{m}}
\DeclareMathOperator{\CD}{CD}
\DeclareMathOperator{\RCD}{RCD}
\title{Sphere theorems for $\RCD$ and stratified spaces}
\author{Shouhei Honda}
\address{Mathematical Institute
Tohoku University
Sendai 980-8578
Japan}
\email{shonda@m.tohoku.ac.jp}
\author{Ilaria Mondello}
\address{Universit\'e de Paris-Est Cr\'eteil, Laboratoire d’Analyse et math\'ematiques appliqu\'ees}
\email{ilaria.mondello@u-pec.fr}
\begin{document}

\nocite{*}

\maketitle
\begin{abstract}
We prove topological sphere theorems for $\RCD(n-1, n)$ spaces which generalize Colding's results and Petersen's result to the $\RCD$ setting. We also get an improved sphere theorem in the case of Einstein stratified spaces. 
\end{abstract}
\section*{Introduction}
In \cite{Colding} Colding proved that if a closed $n$-dimensional Riemannian manifold $(M^n, g)$ with $\mathrm{Ric}_{M^n}^g\ge n-1$ satisfies that the radius is close to $\pi$, then $M^n$ is homeomorphic to the standard $n$-dimensional unit sphere $\mathbb{S}^n$, where the \textit{radius} $\mathrm{rad}(X, d)$ of a metric space $(X, d)$ is defined by:
\begin{equation}
\mathrm{rad} (X, d) = \inf_{x \in X} \sup_{y \in X} d(x, y).
\end{equation}
Thanks to Cheeger-Colding's work \cite{CheegerColding1}, it is known that this homeomorphism can be improved to the diffeomorphism. 
Our main results generalize the previous to a large class of singular spaces, the so-called \textit{$\RCD$ metric measure spaces}, or \textit{$\RCD$ spaces} for short, whose study is now quickly developping (see for instance  \cite{Ambrosio} by Ambrosio for a survey).

%Before giving the precise definition in the next section , we give a quick introduction of $\RCD$ spaces,

Roughly speaking, a metric measure space $(X, d, m)$ is said to be $\RCD(K, N)$ if, in a generalized sense, the Ricci curvature is bounded below by $K$, the dimension is bounded above by $N$ and the space carries some Riemannian structure (we refer to the first section for a precise definition, see Definition \ref{def:rcd}). One of the typical examples can be found in \textit{weighted Riemannian manifolds} $(M^n, d_g, e^{-f}\mu_g)$, where $d_g$ denotes the distance defined by the Riemannian metric $g$, $\mu_g$ is the Riemannian volume measure and $f$ is a smooth function on $M^n$. In fact $(M^n, d_g, e^{-f}\mu_g)$ is a $\RCD(K, N)$ space if and only if $n\le N$ and 
$$
\mathrm{Ric}_{M^n}^g +\mathrm{Hess}_f^g-\frac{df \otimes df}{N-n}\ge Kg
$$
hold. As easily noticed from this example, in the $\RCD$ theory, there is a \textit{flexibility} on the choice of reference measures even if the base metric space $(X, d)$ is fixed. In particular, the measure $m$ is not necessarily the Hausdorff measure associated to the distance. 

Other examples of $\RCD(K,N)$ spaces are given by compact \emph{stratified spaces} $(X^n, d_g,\mu_g)$ endowed with the distance and measure associated to an iterated edge metric $g$, under the suitable assumptions on $g$. Stratified  spaces are singular manifolds with \emph{iterated conical singularities}, isolated or not. When the metric $g$ has Ricci tensor bounded from below on the regular set and angles along the codimension 2 singular set are smaller than $2\pi$, $(X^n,d_g,\mu_g)$ is a $\RCD$ space, as proven in \cite{BKMR} by Bertrand-Ketterer-Richard and the second author. In this work, as a consequence of our main results, we will obtain a sphere theorem for \textit{Einstein} stratified spaces. It is worth pointing out that examples of Einstein stratified spaces occur in various branches of geometry: for instance in mathematical physics, the singular space associated to a static triple \cite{Ambrozio}; in Kähler geometry, Kähler-Einstein manifolds with edge singularities of cone angle smaller than $2\pi$ along a smooth divisor \cite{JMR}. 

We are now in a position to state the main result of the paper:

\begin{thmA}[Topological sphere theorem for $\RCD$ spaces, I]\label{mthm}
For all $n \in \mathbb{N}_{\ge 2}$ there exists a positive constant $\epsilon_n>0$ such that if a compact metric space $(X, d)$ satisfies that $\mathrm{rad}(X, d) \ge \pi- \epsilon_n$ and that $(X, d, \meas)$ is a $\RCD (n-1, n)$ space for some Borel measure $\meas$ on $X$ with full support, then $X$ is homeomorphic to the $n$-dimensional sphere. 
\end{thmA}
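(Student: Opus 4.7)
The plan is to argue by contradiction, combining the precompactness of $\RCD(n-1,n)$ spaces, a maximal-radius rigidity theorem, and a topological stability result for $\RCD$ convergence to a smooth manifold. Assume the conclusion fails. Then there exists a sequence of compact $\RCD(n-1,n)$ spaces $(X_i, d_i, \meas_i)$ with $\mathrm{rad}(X_i, d_i) \to \pi$ and none of the $X_i$ homeomorphic to $\mathbb{S}^n$. The Bonnet--Myers bound available in the $\RCD$ setting gives $\diam(X_i, d_i) \le \pi$; after normalizing to probability measures, the standard measured Gromov--Hausdorff precompactness of $\RCD(K,N)$ spaces produces a subsequential mGH-limit $(X_\infty, d_\infty, \meas_\infty)$, itself a compact $\RCD(n-1,n)$ space. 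Since the radius is continuous under Gromov--Hausdorff convergence of compact spaces, $\mathrm{rad}(X_\infty, d_\infty) = \pi$, and in particular $\diam(X_\infty, d_\infty) = \pi$.

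The next step is to show that $(X_\infty, d_\infty)$ must be isometric to the round $n$-sphere. Ketterer's maximal-diameter theorem for $\RCD(n-1,n)$ spaces forces an isometric splitting $X_\infty = [0,\pi] \times_{\sin} Y$ as a spherical suspension over some $\RCD(n-2,n-1)$ space $Y$. The radius-$\pi$ condition is strictly stronger than diameter maximality: a direct computation of distances in the warped product shows that an interior point of the suspension admits an antipode at distance $\pi$ only when $\diam(Y) = \pi$. Iterating, $Y$ must itself be a maximal-diameter spherical suspension, and a descending induction on the dimension identifies $X_\infty$ with $\mathbb{S}^n$ (with its standard round metric).

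The main obstacle is the final step: converting the mGH convergence $X_i \to \mathbb{S}^n$ into a homeomorphism $X_i \simeq \mathbb{S}^n$ for large $i$. For smooth Riemannian limits this is Cheeger--Colding's intrinsic Reifenberg theorem, and its extensions to non-collapsed $\RCD$ spaces by Kapovitch--Mondino and De Philippis--Gigli are the natural tool here. All tangent cones of $\mathbb{S}^n$ are Euclidean, so once non-collapsing of the measures is arranged---by identifying the essential dimension of the $X_i$ as $n$ (a consequence of having near-maximal radius) and renormalizing $\meas_i$ so that it converges to a positive multiple of the Hausdorff measure on $\mathbb{S}^n$---one obtains bi-Hölder homeomorphisms $X_i \to \mathbb{S}^n$ for all sufficiently large $i$, contradicting the choice of the sequence. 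The technically delicate points are ensuring this non-collapsing along the sequence independently of the choice of reference measure in the hypothesis, and patching the local Reifenberg-type charts into a global homeomorphism.
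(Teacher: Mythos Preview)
Your overall strategy---contradiction, compactness, Ketterer's rigidity identifying the limit as $\mathbb{S}^n$, then topological stability via intrinsic Reifenberg---is exactly the paper's approach. Your derivation of $X_\infty \cong \mathbb{S}^n$ by iterating the suspension is a correct unpacking of what the paper simply cites as Ketterer's result (Theorem~\ref{thm:ket}).

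The one point where your outline is genuinely underspecified is precisely the step you flag as ``technically delicate'': passing from mGH convergence $(X_i,d_i,\meas_i)\to(\mathbb{S}^n,d_{\mathbb{S}^n},a\mathcal{H}^n)$ to uniform Reifenberg flatness of the $X_i$. Your phrase ``renormalizing $\meas_i$ so that it converges to a positive multiple of the Hausdorff measure'' is misleading: the limit measure is \emph{automatically} a multiple of $\mathcal{H}^n$ by Ketterer, so no renormalization is needed there, and no renormalization of $\meas_i$ changes its absolute-continuity class. The Reifenberg argument (and the Kapovitch--Mondino / De~Philippis--Gigli stability you invoke) requires that $(X_i,d_i,\mathcal{H}^n)$ be a non-collapsed $\RCD(n-1,n)$ space, but a~priori $\meas_i$ bears no relation to $\mathcal{H}^n$. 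The paper closes this gap in two steps (Proposition~\ref{prop:open}): first, lower semicontinuity of the essential dimension under pmGH convergence (Kitabeppu, Theorem~\ref{thm:lower}) forces $\dim_{d_i,\meas_i}(X_i)=n$ for large $i$, so each $(X_i,d_i,\meas_i)$ is \emph{weakly} non-collapsed; second---and this is the decisive input---Honda's theorem (Theorem~\ref{thm:weak}, confirming a De~Philippis--Gigli conjecture in the compact case) then yields $\meas_i=c_i\,\mathcal{H}^n$. Only after this can one switch to $(X_i,d_i,\mathcal{H}^n)$, obtain volume almost-maximality $\mathcal{H}^n(X_i)\ge(1-\epsilon)\mathcal{H}^n(\mathbb{S}^n)$, and run Bishop--Gromov at all scales to get the uniform Reifenberg flatness needed for \cite[Thm.~A.1.2--A.1.3]{CheegerColding1}. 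Knowing the essential dimension is $n$ does not by itself grant access to the non-collapsed toolkit; the ``weakly non-collapsed $\Rightarrow$ non-collapsed'' step is the substantive missing ingredient in your outline.
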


This seems the first topological sphere theorem in the $\RCD$ theory. We emphasize again that the theorem states that although there is a flexibility on the choice of $\meas$, the topological structure is uniquely determined. Note that in the previous theorem one cannot replace the radius by the diameter of the space. Indeed, for any $\varepsilon > 0$ Anderson constructed in \cite{Anderson} manifolds of even dimension $n \geq 4$, with Ricci tensor bounded below by $n-1$ and diameter larger than $\pi - \epsilon$, which are \emph{not} homeomorphic to the sphere. Similar examples can be found in \cite{Otsu} by Otsu.

In order to introduce an application, let us recall a result of Petersen \cite{Petersen}; for a closed $n$-dimensional Riemannian manifold $(M^n ,g)$ with $\mathrm{Ric}_{M^n}^g\ge n-1$ the following two conditions are equivalent quantitatively:
\begin{enumerate}
\item The $(n+1)$-th eigenvalue of the Laplacian is close to $n$
\item The radius is close to $\pi$.
\end{enumerate} 
In particular if the one of them above holds, then $M^n$ is diffeomorphic to $\mathbb{S}^n$.

Note that even in the $\RCD$-setting, the above equivalence is justified by the spectral convergence result of Gigli-Mondino-Savar\'e \cite{GigliMondinoSavare} and the rigidity results of Ketterer \cite{Ketterer2}. In particular we have the following;
\begin{corA}[Topological sphere theorem for $\RCD$ spaces, II]\label{thm:eigenhomeo}
For all $n \in \mathbb{N}_{\ge 2}$ there exists a positive constant $\epsilon_n>0$ such that if a $\RCD(n-1, n)$ space $(X, d, \meas)$ satisfies 
\begin{equation}\label{eq:maxeig}
\lambda_{n+1} \le n+\epsilon_n,
\end{equation}
then $X$ is homeomorphic to $\mathbb{S}^n$, where $\lambda_k:=\lambda_k(X, d, \meas)$ denotes the $k$-th eigenvalue of the (minus) Laplacian $-\Delta$ on $(X, d, \meas)$. 
\end{corA}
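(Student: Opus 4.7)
The plan is to deduce Corollary~B from Theorem~A by proving the quantitative implication: for every $\delta > 0$ there exists $\epsilon > 0$ such that any $\RCD(n-1, n)$ space $(X, d, \meas)$ with $\lambda_{n+1}(X, d, \meas) \le n + \epsilon$ satisfies $\mathrm{rad}(X, d) \ge \pi - \delta$. Choosing $\delta$ equal to the constant provided by Theorem~A and setting the $\epsilon_n$ of the corollary to be the resulting $\epsilon$, Theorem~A then produces the desired homeomorphism with $\mathbb{S}^n$.

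To establish the implication I would run a contradiction-compactness argument. Suppose for some $\delta > 0$ there is a sequence of $\RCD(n-1, n)$ spaces $(X_i, d_i, \meas_i)$ with $\lambda_{n+1}(X_i, d_i, \meas_i) \to n$ but $\mathrm{rad}(X_i, d_i) \le \pi - \delta$. The generalized Bonnet--Myers theorem forces $\diam(X_i, d_i) \le \pi$, so after normalizing the reference measures the $\RCD$ precompactness and stability results furnish a subsequence converging in pointed measured Gromov--Hausdorff topology to a limit $(X_\infty, d_\infty, \meas_\infty)$, still an $\RCD(n-1, n)$ space. Spectral stability of Gigli--Mondino--Savar\'e \cite{GigliMondinoSavare} gives $\lambda_{n+1}(X_\infty) \le n$, while the Lichnerowicz inequality on $\RCD(n-1, n)$ spaces gives $\lambda_1(X_\infty) \ge n$. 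Hence $\lambda_1(X_\infty) = \cdots = \lambda_{n+1}(X_\infty) = n$, so the eigenspace associated to $\lambda_1 = n$ on $X_\infty$ has dimension at least $n+1$.

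Next I would invoke Ketterer's rigidity \cite{Ketterer2}: equality in the Lichnerowicz bound for an $\RCD(n-1, n)$ space forces $X_\infty$ to be a spherical suspension over an $\RCD(n-2, n-1)$ space. Iterating the splitting along each linearly independent first eigenfunction, and using that the dimensional parameter drops by one at each step, the $(n+1)$-dimensional eigenspace forces $X_\infty$ to be isometric to the round sphere $\mathbb{S}^n$. In particular $\mathrm{rad}(X_\infty, d_\infty) = \pi$. A standard inf-sup argument then shows that the radius is lower semicontinuous under Gromov--Hausdorff convergence of compact metric spaces: approximating a radius-minimizing point of $X_\infty$ by points of $X_i$ yields $\liminf_i \mathrm{rad}(X_i, d_i) \ge \mathrm{rad}(X_\infty) = \pi$, contradicting $\mathrm{rad}(X_i, d_i) \le \pi - \delta$.

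The main obstacle I anticipate is the iterated rigidity step: one must combine Ketterer's one-shot spherical suspension characterization with a sharp multiplicity bound for the first eigenspace in the $\RCD$ category, and confirm that multiplicity $n+1$ really forces an isometry with $\mathbb{S}^n$ (rather than merely with some warped or weighted variant). The remaining ingredients---pmGH precompactness and stability of the $\RCD(n-1, n)$ condition, spectral convergence of Gigli--Mondino--Savar\'e, Bonnet--Myers, and lower semicontinuity of the radius---are by now standard in the $\RCD$ literature.
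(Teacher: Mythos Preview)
Your argument is correct and follows essentially the same route as the paper: contradiction, pmGH precompactness, spectral convergence of \cite{GigliMondinoSavare}, Ketterer's rigidity to identify the limit as $\mathbb{S}^n$, then Theorem~A. The only point worth flagging is that the ``main obstacle'' you anticipate is not one: Ketterer's rigidity (stated in the paper as Theorem~\ref{thm:ket}(2), from \cite{Ketterer,Ketterer2}) already gives directly that $\lambda_{n+1}=n$ on an $\RCD(n-1,n)$ space forces an isometry with $(\mathbb{S}^n,d_{\mathbb{S}^n})$, so no iteration or multiplicity analysis is needed on your side; also, the radius is in fact \emph{continuous} under GH convergence of compact spaces, which is slightly stronger than the lower semicontinuity you invoke.
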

In the second section, we give a proof for reader's convenience.
In Corollary \ref{thm:eigenhomeo} it is known that if $(X, d, \meas)$ is a Riemannian manifold, that is, $(X, d, \meas)$ is isometric to $(M^n, d_g, \mu_g)$ for a closed Riemannian manifold $(M^n, g)$, then the assumption (\ref{eq:maxeig}) can be replaced by a weaker one;
\begin{equation}
\lambda_n \le n+\epsilon_n.
\end{equation}
See \cite{Aubry} by Aubry (see also \cite{Bert} by Bertrand and \cite{Honda09} by the first author). However in the $\RCD$ setting we can not get such improvement. In fact, the $n$-dimensional unit hemisphere $\mathbb{S}^n_+$ with the standard Riemannian measure is a $\RCD(n-1, n)$ space with $\lambda_k=n$ for all $1\le k \le n$, but it is not homeomorphic to $\mathbb{S}^n$. Thus Corollary \ref{thm:eigenhomeo} is sharp in this sense.

Let us explain how to prove the main theorem.
For that, we recall the original proof by Colding. First, he proved that if the radius is close to $\pi$, then the volume is almost maximal. Perelman's topological sphere theorem \cite{Perelman} for almost maximal volume then allows Colding to conclude. 

We follow a similar argument. However, the almost maximality of the volume does not make sense in the general setting of RCD spaces, because, as we pointed out above, there is flexibility in the choice of measures. In order to overcome this difficulty, the assumption in Theorem \ref{mthm} allows us to get rid of such flexibility of the measure $\meas$, in the following sense: we start by proving
%we first kill the flexibility of the measure $\meas$ in the following sense, under the assumption as in Theorem \ref{mthm}, we prove;
\begin{equation}
\meas=\frac{\meas(X)}{\mathcal{H}^n(X)} \mathcal{H}^n,
\end{equation}
where $\mathcal{H}^n$ is the $n$-dimensional Hausdorff measure. This is justified by using a recent result of the first author \cite{Honda19} which confirms a conjecture by De Philippis-Gigli (see Remark 1.9 in \cite{DePhilippisGigli}) in the compact setting. 

Then by combining this with a compactness result for non-collapsed $\RCD$ spaces by De Philippis-Gigli \cite{DePhilippisGigli} and Ketterer's rigidity \cite{Ketterer}, we can show that our situation is reduced to the study of the following measured Gromov-Hausdorff convergent sequence of $\RCD(n-1, n)$ spaces:
$$
(X_i, d_i, \mathcal{H}^n) \stackrel{mGH}{\to} (\mathbb{S}^n, d_{\mathbb{S}^n}, \mathcal{H}^n).
$$
Then we can follow an argument similar to Colding's proof by using the intrinsic Reifenberg theorem \cite{CheegerColding1} by Cheeger-Colding instead of using Perelman's topological sphere theorem \cite{Perelman}.

One step in the previous proof consists in showing that the almost maximality of the Hausdorff measure implies that the space is homeomorphic to the sphere (see Theorem \ref{prop:toprigidity} in the following). The same assumption in the case of an \emph{Einstein} stratified space actually allows us to get a stronger sphere theorem, independently of the $\RCD$ theory:

\begin{corA}[Sphere theorem for Einstein stratified spaces]
For all $n \in \mathbb{N}_{\ge 2}$ there exists a positive constant $\epsilon_n>0$ such that the following holds. Let $(X, g)$ be  compact $n$-dimensional stratified space endowed with an iterated edge metric $g$ such that  $\mathrm{Ric}_g \equiv n-1$ on the regular set. Assume that there is no singular stratum of codimension 2. If $\mu_g(X) \ge (1-\epsilon_n)\mathcal{H}^n(\mathbb{S}^n)$, then $(X, d_g)$ is isometric to $(\mathbb{S}^n, d_{\mathbb{S}^n})$.
\end{corA}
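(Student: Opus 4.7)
The plan is to argue by contradiction and compactness, reducing the statement to the non-collapsed $\RCD$ rigidity of the round sphere and then using the special structure of stratified Einstein geometries to rule out singularities. Suppose the conclusion fails: then there is a sequence $(X_i, g_i)$ of compact $n$-dimensional stratified spaces with iterated edge metrics satisfying $\mathrm{Ric}_{g_i} \equiv n-1$ on the regular set and no codimension~2 stratum, with $\mu_{g_i}(X_i) \to \mathcal{H}^n(\mathbb{S}^n)$ but no $(X_i, d_{g_i})$ isometric to $(\mathbb{S}^n, d_{\mathbb{S}^n})$. By the result of Bertrand-Ketterer-Richard and the second author \cite{BKMR}, each $(X_i, d_{g_i}, \mu_{g_i})$ is an $\RCD(n-1, n)$ space---the angle condition on codimension~2 strata is vacuous in this setting. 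Since the singular set has codimension $\ge 2$, it is $\mathcal{H}^n$-negligible, so $\mu_{g_i} = \mathcal{H}^n$ and each space is non-collapsed in the sense of De~Philippis-Gigli, with $\mathcal{H}^n(X_i) \le \mathcal{H}^n(\mathbb{S}^n)$ close to equality.

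Next, the compactness and volume continuity theorems for non-collapsed $\RCD$ spaces of \cite{DePhilippisGigli} yield, up to a subsequence, an mGH convergence $(X_i, d_{g_i}, \mathcal{H}^n) \to (X_\infty, d_\infty, \mathcal{H}^n)$ with $\mathcal{H}^n(X_\infty) = \mathcal{H}^n(\mathbb{S}^n)$. Ketterer's rigidity \cite{Ketterer} for the equality case of the $\RCD$ Bishop-Gromov inequality then identifies the limit as the round sphere: $(X_\infty, d_\infty)$ is isometric to $(\mathbb{S}^n, d_{\mathbb{S}^n})$.

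The decisive step is to upgrade this mGH convergence to an eventual isometry $X_i \cong \mathbb{S}^n$. Here the stratified Einstein structure enters crucially. At any singular point $p_i$ of $X_i$, necessarily lying in a stratum of codimension $\ge 3$, the tangent cone $C(Z_i)$ has density $\theta_n(X_i, p_i) = \mathcal{H}^{n-1}(Z_i)/\mathcal{H}^{n-1}(\mathbb{S}^{n-1}) < 1$. By upper semicontinuity of densities along non-collapsed $\RCD$ limits and the identical constancy $\theta_n \equiv 1$ on the limit sphere, singular points with a uniform density defect cannot persist in the sequence. On the regular set, elliptic regularity for the Einstein equation gives smooth convergence of $g_i$ to $g_{\mathbb{S}^n}$. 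Combined with an analysis of the links---plausibly by induction on dimension via the present theorem applied to the $(n-1)$-dimensional link---this rules out the remaining case of singular points whose density approaches one, producing the desired contradiction.

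The principal obstacle is precisely this last promotion from mGH closeness to isometry: controlling stratified singularities whose geometric defect may be arbitrarily small along the sequence requires both the density upper semicontinuity from the non-collapsed $\RCD$ theory and a finer, stratum-by-stratum analysis of Einstein links, potentially via a dimensional induction rooted in this sphere theorem itself.
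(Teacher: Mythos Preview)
Your proposal identifies the right ingredients---non-collapsed $\RCD$ structure, Bishop--Gromov with its rigidity, and the idea of analyzing links by induction on dimension---but the organization leaves a genuine gap. You frame the argument as a sequence compactness argument and then concede that the ``principal obstacle'' is the final promotion from mGH closeness to isometry; this step is not actually carried out, only described as plausibly requiring density semicontinuity and a stratum-by-stratum induction. In particular, even once you know $X_i$ is eventually smooth, you still need a rigidity statement (the paper uses Cheeger--Colding smooth convergence together with Tachibana's theorem on positive curvature operator) to conclude isometry with $\mathbb{S}^n$; you do not invoke anything of this kind.

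The paper's proof bypasses the sequence-level difficulty entirely. It first settles the \emph{smooth} case by a short contradiction argument (smooth convergence to $\mathbb{S}^n$, hence positive curvature operator and simple connectedness for large $i$, hence constant sectional curvature by Tachibana). It then fixes the $\epsilon_n$ from the smooth case and argues directly on a \emph{single} stratified space $X$: for each $x\in X$, the almost-maximal volume propagates via Bishop--Gromov and blow-up to the tangent sphere $S_x$, which is an $(n-1)$-dimensional Einstein stratified space with no codimension-$2$ stratum (Lemmas \ref{lem:Codim2sing} and \ref{lem:RicTg}) and $\mu_{h_x}(S_x)\ge (1-\hat\epsilon_{n-1})\mathcal{H}^{n-1}(\mathbb{S}^{n-1})$. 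By the inductive hypothesis $S_x\cong \mathbb{S}^{n-1}$, so $x$ is regular; hence $X$ is smooth and the smooth case applies. The induction you gesture at is precisely the engine of the argument, but it should be run on tangent spheres of a single space, not embedded inside a sequence compactness scheme where it becomes hard to make rigorous.
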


The assumption on the codimension 2 stratum cannot be dropped. Indeed, consider a compact Riemannian surface $(X,g)$ with sectional curvature equal to one away from a finite number of isolated conical singularities of angles smaller than $2\pi$. Thanks to \cite{BKMR}, such surface is a $\RCD(1,2)$ space, and therefore the almost maximality of its volume implies that $(X,g)$ is homeomorphic to $\s^2$ and that the angles at the singularities are close to $2\pi$. Nevertheless, this cannot be improved to an isometry. The strategy of our proof actually consists in showing that there cannot be any singularity of codimension strictly greater than 2.  Moreover, a local almost maximality for the volume allows one to control the regularity in a neighbourhood of a point (see Corollary 3.5), even if the space is not compact. Both of these proofs do not depend on the space being a  $\RCD$ space. 

The paper is organized as follows: in the next section we give a quick introduction about $\RCD$ spaces and recall related results we need later. 
In Section $2$, after preparing few technical results, we prove the main results. The last section is devoted to showing Corollary C in the case of Einstein stratified spaces, for which we state the basic notions that we need.

After we finalized this article we learned that the paper \cite{KM} by Kapovitch-Mondino contains the same results as in Corollary \ref{thm:eigenhomeo} and Theorem \ref{prop:toprigidity} as an independent work. Their proofs are close to ours, but the scopes of our works differ: while they are  mainly concerned with the topology and the boundary of non-collapsed $\RCD$ spaces, our work also intends to discuss sphere theorems in the more specific setting of stratified spaces.

\smallskip\noindent
\textbf{Acknowledgement.}
The authors would like to thank Takumi Yokota for informing the smooth version of Theorem \ref{thm:strati} and Alexander Lytchak for his comments concerning Alexandrov spaces. The second author would like to thank Erwann Aubry for useful discussions. The first author acknowledges supports of the Grantin-Aid
for Young Scientists (B) 16K17585 and Grant-in-Aid for Scientific Research (B) of 18H01118.

\section{Preliminary}
We say that a triple $(X, d, \meas)$ is a \textit{metric measure space} if $(X, d)$ is a complete separable metric space and $\meas$ is a Borel measure on $X$ with full support, that is, $\meas (B_r(x))>0$ holds for all $x \in X$ and all $r>0$, where $B_r(x)$ denotes the open ball centered at $x$ of radius $r$.
 
Throughout this section we fix $K \in \mathbb{R}$, $N \in (1, \infty)$ and $n \in \mathbb{N}_{\ge 2}$.
\subsection{General $\RCD$ space}
Let us fix a metric measure space $(X, d, \meas)$. 
The goal of this section is to give a quick introduction on $\RCD$ spaces with their fundamental properties.
The Cheeger energy
$\mathrm{Ch}=:L^2(X,\meas)\to [0,+\infty]$ is a convex and $L^2(X,\meas)$-lower semicontinuous functional defined as follows:
\begin{equation}\label{eq:defchp}
\mathrm{Ch}(f):=\inf\left\{\liminf_{n\to\infty}\frac{1}{2}\int_X(\mathrm{Lip}  f_n)^2d\meas:\ \text{$f_n\in\mathrm{Lip}_b (X,d)\cap L^2(X, \meas)$, $\|f_n-f\|_{L^2}\to 0$}\right\}, 
\end{equation}
where $\mathrm{Lip} f$ denotes the local Lipschitz constant and $\mathrm{Lip}_b(X,d)$ is the
space of bounded Lipschitz functions. 
The Sobolev space $H^{1,2}(X,d,\meas)$ then coincides with the domain of the Cheeger energy, that is $\{f\in L^2(X,m):\ \mathrm{Ch}(f)<+\infty\}$. When endowed with the norm
$$
\|f\|_{H^{1,2}}:=\left(\|f\|_{L^2(X,\meas)}^2+2\mathrm{Ch}(f)\right)^{1/2}
$$
this space is Banach and  
separable Hilbert if $\mathrm{Ch}$ is a quadratic form (see \cite{AmbrosioGigliSavare14}). 
According to the terminology introduced in \cite{Gigli1}, we say that a 
metric measure space $(X,d,\meas)$ is \textit{infinitesimally Hilbertian} if $\mathrm{Ch}$ is a quadratic form.
  
By looking at minimal relaxed slopes and by a polarization procedure, one can then define a {\it carr\'e du champ}
$$
\Gamma:H^{1,2}(X,d,\meas)\times H^{1,2}(X,d,\meas)\rightarrow L^1(X,\meas)
$$
playing in this abstract theory the role of the scalar product between gradients. In infinitesimally Hilbertian metric measure
spaces, the $\Gamma$ operator
satisfies all natural symmetry, bilinearity, locality and chain rule properties, and provides integral representation to $\mathrm{Ch}$:
$$ 2\mathrm{Ch}(f)=\int_X \Gamma(f,f)\,d\meas,
$$ 
for all $f\in H^{1,2}(X,d,\meas)$. 

We can now define a densely
defined operator $\Delta:D(\Delta)\to L^2(X,\meas)$ whose domain consists of all functions $f\in H^{1,2}(X,d,\meas)$
satisfying
$$
 \int_X hgd\meas=-\int_X \Gamma(f,h)d\meas\quad\qquad\forall h\in H^{1,2}(X,d,\meas)
$$
for some $g\in L^2(X,\meas)$. The unique $g$ with this property is then denoted by $\Delta f$ (see \cite{AmbrosioGigliSavare13}).

We are now in a position to introduce the definition of $\RCD$ spaces.
\begin{definition}[$\RCD$ spaces]\label{def:rcd}
For $K \in \mathbb{R}$ and $\hat{N} \in [1, \infty]$, $(X, d, \meas)$ is said to be a \textit{$\RCD(K, \hat{N})$ space} if the following are satisfied: 
\begin{enumerate}
\item Infinitesimally Hilbertian: it is inifinitesimally Hilbertian;
\item Volume growth: there exist $x \in X$ and $c>1$ such that $\meas (B_r(x)) \le Ce^{Cr^2}$ for all $r>0$;
\item Sobolev-to-Lipschitz property : any  
$f\in H^{1,2}(X,d,\meas)$ with $\Gamma (f, f) \leq 1$ $\meas$-a.e. in $X$ 
has a $1$-Lipschitz representative.
\item Bakry-\'Emery inequality : for all $f\in D(\Delta)$ with $\Delta f\in H^{1,2}(X,d,\meas)$, 
\begin{equation}\label{eq:boch}
\frac{1}{2}\int_X \Gamma (f, f)\Delta \phi d\meas\ge
\int_X\phi\left(\frac{(\Delta f)^2}{\hat N}+ \Gamma (f, \Delta f)+ K\Gamma (f, f)\right)d\meas 
\end{equation}
for all $\phi\in D(\Delta) \cap L^{\infty}(X, \meas)$ with $\phi\geq 0$ and $\Delta\phi\in L^\infty(X,\meas)$.
\end{enumerate}
\end{definition}
It is worth pointing out that a $\RCD(K,N)$ space was originally defined as a metric measure space which is infinitesimally Hilbertian and satisfies the $\CD(K,N)$ condition in the sense of Lott-Sturm-Villani (see 
\cite{AmbrosioGigliSavare14, AmbrosioMondinoSavare, ErbarKuwadaSturm, Gigli1, LottVillani, Sturm1, Sturm2}). Such definition has been proven to be equivalent to the formulation given by Definition  \ref{def:rcd} (see also \cite{CavMil} for the equivalence between $\RCD$ and $\RCD^*$).
%\footnote{SH: I noticed that maybe it is better to give credit to \cite{AmbrosioMondinoSavare, ErbarKuwadaSturm} as the papers gave the definition of $\RCD$ spaces. So I moved the papers to the above from here.} 

In order to keep our presentation short, we skip the definitions of \textit{pointed measured Gromov-Hausdorff convergence} (pmGH), of \textit{measured Gromov-Hausdorff convergence} (mGH), and of \textit{pointed Gromov-Hausdorff convergence} (pGH). We refer to \cite{CheegerColding1, Fukaya, GigliMondinoSavare, Sturm1, Sturm2} for the precise definitions. Note that the radius is continuous with respect to the Gromov-Hausdorff convergence.

Let us introduce a compactness result for $\RCD$ spaces with respect to the pmGH convergence, which follows from \cite[Cor.3.22, Thm.7.2]{GigliMondinoSavare}.
\begin{theorem}[Compactness of $\RCD$ spaces]\label{thm:comprcd}
Let $(X_i, d_i, \meas_i, x_i) (i=1, 2, \ldots)$ be a sequence of pointed $\RCD(K, N)$ spaces. If there exist $v_i >0(i=1, 2)$ with $v_1\le v_2$ such that $v_1\le \meas_i(B_1(x_i))\le v_2$ holds for all $i$, then there exist a subsequence $(X_{i(j)}, d_{i(j)}, \meas_{i(j)}, x_{i(j)})$ and a pointed $\RCD (K, N)$ space $(X, d, \meas, x)$ such that 
$$(X_{i(j)}, d_{i(j)}, \meas_{i(j)}, x_{i(j)}) \stackrel{pmGH}{\to} (X, d, \meas, x),$$
that is, $(X_{i(j)}, d_{i(j)}, \meas_{i(j)}, x_{i(j)})$ pmGH converge to $(X, d, \meas, x)$. 
\end{theorem}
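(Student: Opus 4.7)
The plan is to combine Bishop--Gromov volume comparison, which holds for $\RCD(K,N)$ spaces through the underlying $\CD(K,N)$ condition, with the stability of the $\RCD$ condition under pointed measured Gromov--Hausdorff convergence; both ingredients are available in \cite{GigliMondinoSavare}.

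First I would exploit the $\CD(K,N)$ property to invoke Bishop--Gromov on each $(X_i, d_i, \meas_i, x_i)$. The lower bound $\meas_i(B_1(x_i)) \ge v_1$ yields, through the monotonicity of $r \mapsto \meas_i(B_r(x_i))/V_{K,N}(r)$, a uniform lower bound $\meas_i(B_R(x_i)) \ge c(R, K, N) v_1$ for every $R > 0$, while the upper bound $\meas_i(B_1(x_i)) \le v_2$ gives $\meas_i(B_R(x_i)) \le c'(R,K,N) v_2$. In particular, the sequence is uniformly doubling on every bounded set, with constants depending only on $K$, $N$, and the scale.

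Uniform doubling then yields equi-total-boundedness: for every $R > 0$ and every $\eps > 0$, the cardinality of any $\eps$-net of $B_R(x_i)$ is bounded independently of $i$. This is precisely the hypothesis of Gromov's classical pre-compactness theorem, so up to extracting a subsequence, $(X_{i(j)}, d_{i(j)}, x_{i(j)})$ converges in pointed Gromov--Hausdorff sense to some proper pointed metric space $(X, d, x)$. The uniform two-sided bound on $\meas_{i(j)}(B_R(x_{i(j)}))$ provides the equi-tightness needed to pass to a further subsequence along which $\meas_{i(j)}$ converges weakly, in the pmGH sense, to a locally finite Borel measure $\meas$ on $X$; the uniform lower bound ensures $\meas(B_1(x)) \ge v_1 > 0$, so the limit is non-trivial.

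Finally I would invoke the stability theorem \cite[Cor.~3.22, Thm.~7.2]{GigliMondinoSavare}: the $\RCD(K,N)$ condition is preserved under pmGH convergence, so the limit $(X, d, \meas, x)$ is itself a pointed $\RCD(K,N)$ space (restricting $X$ to the support of $\meas$ if necessary to match the full-support convention). The main obstacle is really concentrated in this last step: the stability of $\RCD(K,N)$ is a deep result that relies on the Mosco/$\Gamma$-convergence of the Cheeger energies along the converging sequence and on the stability of the Bakry--\'Emery inequality under measured convergence, and it is exactly what is established in the cited references; the preceding extraction of a pmGH limit is essentially a routine application of Gromov's theorem once the volume bounds are in place.
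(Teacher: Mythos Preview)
Your proposal is correct and follows exactly the route the paper has in mind: the paper does not give an independent proof but simply records that the theorem ``follows from \cite[Cor.~3.22, Thm.~7.2]{GigliMondinoSavare}'', which is precisely the stability result you invoke after the standard Bishop--Gromov/Gromov-precompactness extraction. In other words, you have spelled out the details behind the paper's one-line citation, and nothing more is needed.
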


The next definition is a key notion of the paper. Inspired by the stratification of Ricci limit spaces given by Cheeger-Colding theory, one can define a $k$-regular set as the set of points for which the tangent cone, in the sense of pmGH convergence, is the Euclidean space $\R^k$. More precisely we have:
\begin{definition}[Regular set]
Let $(X, d, \meas)$ be a $\RCD (K, N)$ space and let $k \in \mathbb{N}$.
Then the \textit{$k$-dimensional regular set $\mathcal{R}_k=\mathcal{R}_k(X)$} is defined by the set of all points $x \in X$ satisfying that
\begin{equation}
(X, r^{-1}d, (\meas (B_r(x)))^{-1}\meas, x) \stackrel{pmGH}{\to} (\mathbb{R}^k, d_{\mathbb{R}^k}, \omega_k^{-1}\mathcal{L}^k, 0_k) \quad (r \to 0^+).
\end{equation}
\end{definition}
By the Bishop-Gromov inequality (see \cite{LottVillani, Sturm1, Sturm2, Villani}) it is easy to check that $\mathcal{R}_k=\emptyset$ for all $k \in (N, \infty) \cap \mathbb{N}$.
\begin{remark}
It is proved that for $x \in X$ and $k \in \mathbb{N}$, if $(X, r^{-1}d, x) \stackrel{pGH}{\to} (\mathbb{R}^k, d_{\mathbb{R}^k}, 0_k)$, then $x \in \mathcal{R}_k$. The proof is same as the one of \cite[Prop.1.35]{CheegerColding1}. Thus the $k$-dimensional regular set is a purely metric notion in this sense.
\end{remark}

The following theorem is proved in \cite[Thm.0.1]{BrueSemola} (after \cite[Cor.1.2]{MondinoNaber}). It generalizes a result of \cite[Thm.1.18]{ColdingNaber} to $\RCD$ spaces and allows one to define a unique \emph{essential dimension} for a $\RCD$ space.
\begin{theorem}[Essential dimension]
Let $(X, d, \meas)$ be a $\RCD (K, N)$ space. Assume that $X$ is not a single point.
Then there exists a unique $k:=\mathrm{dim}_{d, \meas}(X) \in \mathbb{N} \cap [1, N]$ such that $\meas (X \setminus \mathcal{R}_k)=0$. We call it the \textit{essential dimension} of $(X, d, \meas)$. 
\end{theorem}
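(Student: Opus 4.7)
The plan is to establish the theorem in two independent steps: first, that $\meas$-almost every point admits at least one tangent isometric to a Euclidean space $\mathbb{R}^k$ for some $k$ (a priori depending on the point), so that $\meas\bigl(X \setminus \bigcup_{k \ge 1} \mathcal{R}_k\bigr) = 0$; and second, that this integer $k$ is in fact the same for $\meas$-almost every $x$.

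For the existence step I would adapt the Mondino--Naber approach. By Theorem \ref{thm:comprcd}, at every $x \in X$ and every sequence $r_i \to 0^+$ the rescaled pointed spaces $(X, r_i^{-1}d, \meas(B_{r_i}(x))^{-1}\meas, x)$ admit a pmGH-convergent subsequence whose limit is a pointed $\RCD(0,N)$ space. Gigli's splitting theorem for $\RCD(0,N)$ spaces then asserts that any tangent containing a line splits isometrically as $\mathbb{R} \times Y$ with $Y$ a $\RCD(0, N-1)$ space. Iterating this, together with a Preiss-type argument on iterated tangent cones (tangents of tangents are again tangents for $\meas$-a.e. $x$), one extracts at a $\meas$-generic point a tangent of the form $\mathbb{R}^k \times Y$ in which $Y$ is a pointed $\RCD(0, N-k)$ space whose only tangent at its basepoint is a single point; this forces $Y$ itself to be a point, so $x \in \mathcal{R}_k$. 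The extraction of Euclidean directions is made quantitative through $\delta$-splitting maps built from harmonic functions with controlled Hessian via the Bakry--\'Emery inequality \eqref{eq:boch}.

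For the uniqueness step, disjointness of the sets $\mathcal{R}_k$ for different $k$ is immediate from the definition, since it requires the rescalings to converge along \emph{every} sequence $r_i \to 0^+$ and $\mathbb{R}^k$ is not isomorphic to $\mathbb{R}^{k'}$ when $k \neq k'$. The nontrivial content is to rule out simultaneous positivity of $\meas(\mathcal{R}_k)$ and $\meas(\mathcal{R}_{k'})$ for $k \neq k'$. Here I would follow the Br\'ue--Semola strategy: on the rectifiable piece $\mathcal{R}_k$ one has a well-defined tangent module of rank $k$, and the pointwise dimension of this module is preserved along flows generated by sufficiently regular divergence-free vector fields obtained from the heat flow. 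I expect the main obstacle to be precisely this constancy-along-flow property: in the purely synthetic setting one cannot smooth out and must work directly with the carr\'e du champ calculus, heat kernel estimates, and stochastic completeness, rather than appeal to smooth approximations as in the Ricci-limit setting. Once the pointwise dimension is shown to be constant along such flows, the fact that these flows reach $\meas$-almost every point of $X$ yields the uniqueness of $k$, and the bound $k \le N$ follows from Bishop--Gromov as remarked before the statement.
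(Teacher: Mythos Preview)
The paper does not prove this theorem at all: it is stated in the preliminaries as a result quoted from \cite[Thm.~0.1]{BrueSemola} (after \cite[Cor.~1.2]{MondinoNaber}), with no argument given. So there is no ``paper's own proof'' to compare against.

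Your outline is a faithful high-level summary of precisely those two cited works --- Mondino--Naber for the rectifiability/existence step and Bru\'e--Semola for the constancy step --- and in that sense it matches what the paper is implicitly invoking. One small inaccuracy: you write that disjointness of the $\mathcal{R}_k$ is immediate ``since it requires the rescalings to converge along \emph{every} sequence $r_i \to 0^+$''. That is exactly the definition used in the paper, so disjointness is indeed trivial; but then your existence sketch only produces \emph{some} Euclidean tangent at a generic point, not uniqueness of the tangent. Establishing that $\meas$-a.e.\ point actually lies in some $\mathcal{R}_k$ (i.e.\ that \emph{all} tangents there are $\mathbb{R}^k$) requires the full Mondino--Naber argument showing the set of points with a non-unique tangent has measure zero, which your sketch elides. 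Apart from that gap, the proposal is a reasonable roadmap of the cited literature, but be aware that for the purposes of this paper the theorem is treated as a black box.
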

We end this subsection by introducing a fundamental property on the essential dimension proved in \cite[Thm.1.5]{Kita};
\begin{theorem}[Lower semicontinuity of essential dimensions]\label{thm:lower}
The essential dimension is lower semicontinuous with respect to the pointed measured Gromov-Hausdorff convergence of $\RCD(K, N)$ spaces.
\end{theorem}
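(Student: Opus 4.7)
The plan is to argue by contradiction via an iterated tangent cone construction. Suppose the conclusion fails: after passing to a subsequence we may assume $\mathrm{dim}_{d_i,\meas_i}(X_i) \equiv k$ with $k < n := \mathrm{dim}_{d,\meas}(X)$.

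First, I would pick an $n$-regular base point in the limit. Since $\meas(\mathcal{R}_n(X)) = \meas(X) > 0$ by the essential dimension theorem, there exists $y \in \mathcal{R}_n(X)$, and by definition
\begin{equation*}
\left(X,\, r^{-1} d,\, \meas(B_r(y))^{-1}\meas,\, y\right) \stackrel{pmGH}{\to} \left(\mathbb{R}^n,\, d_{\mathbb{R}^n},\, \omega_n^{-1}\mathcal{L}^n,\, 0_n\right) \qquad \text{as } r\downarrow 0.
\end{equation*}
Choosing $y_i \in X_i$ with $y_i \to y$ and combining the tangent blow-up at $y$ with the pmGH convergence $X_i \to X$, a standard diagonal extraction (justified by the compactness Theorem \ref{thm:comprcd}) produces radii $r_i \downarrow 0$ such that the rescaled $\RCD(r_i^2 K, N)$ spaces
\begin{equation*}
(\tilde X_i, \tilde d_i, \tilde \meas_i, y_i) := \left(X_i,\, r_i^{-1} d_i,\, \meas_i(B_{r_i}(y_i))^{-1} \meas_i,\, y_i\right)
\end{equation*}
also pmGH-converge to $(\mathbb{R}^n, d_{\mathbb{R}^n}, \omega_n^{-1} \mathcal{L}^n, 0_n)$.

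Since rescaling preserves the tangent cone structure, $\mathcal{R}_k(\tilde X_i) = \mathcal{R}_k(X_i)$, so $\tilde X_i$ still has essential dimension $k$. By Br\'ue--Semola, $\mathcal{R}_k(\tilde X_i)$ has full $\tilde\meas_i$-measure, hence one can pick $z_i \in \mathcal{R}_k(\tilde X_i) \cap B_1(y_i)$, and after extraction $z_i \to z$ for some $z \in \overline{B}_1(0) \subset \mathbb{R}^n$.

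The final step is to build the contradiction by iterating the blow-up at $z_i$. I would select scales $s_i \downarrow 0$ so that the twice-rescaled pointed $\RCD$ spaces $(\tilde X_i, s_i^{-1} \tilde d_i, z_i)$ pmGH-converge simultaneously to a tangent of $\mathbb{R}^n$ at $z$, which is $(\mathbb{R}^n, d_{\mathbb{R}^n}, 0_n)$, and to a tangent of $\tilde X_i$ at the $k$-regular point $z_i$, which is $(\mathbb{R}^k, d_{\mathbb{R}^k}, 0_k)$. Uniqueness of pmGH-limits then forces $\mathbb{R}^k \cong \mathbb{R}^n$, contradicting $k<n$. The main technical obstacle is precisely this two-parameter diagonal: $s_i$ has to be large compared with the pmGH-convergence rate $\epsilon_i \to 0$ of $\tilde X_i$ toward $\mathbb{R}^n$ (to still see the $n$-dimensional Euclidean limit after rescaling) and, at the same time, small compared with the resolution scale of the $k$-tangent at $z_i$ (to recover $\mathbb{R}^k$). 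I would reconcile these opposite requirements by exploiting the freedom to choose $z_i$ within the full-measure set $\mathcal{R}_k(\tilde X_i)$: a measure-theoretic selection argument, using the density of regular points at a uniform scale (a Vitali-type covering in the spirit of quantitative Mondino--Naber), allows one to pick $z_i$ whose $k$-tangent resolves at a scale bounded away from $0$ in $\tilde d_i$. Once $i$ is large enough that $\epsilon_i$ is smaller than this uniform scale, a valid $s_i$ exists and the contradiction is reached.
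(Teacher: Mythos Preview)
The paper does not prove this theorem; it quotes it from Kitabeppu \cite[Thm.~1.5]{Kita} and uses it as a black box (in the proofs of Theorem~\ref{thm:topdim} and Proposition~\ref{prop:open}). There is thus no proof in the paper to compare your attempt against.

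Your first reduction is standard and correct: a diagonal extraction produces rescalings $(\tilde X_i,\tilde d_i,\tilde\meas_i,y_i)\stackrel{pmGH}{\to}(\mathbb{R}^n,d_{\mathbb{R}^n},\omega_n^{-1}\mathcal{L}^n,0_n)$, each $\tilde X_i$ still of essential dimension $k<n$. The genuine gap is in the final step. You want a scale $s_i$ at which $(\tilde X_i,z_i)$ is simultaneously close to $\mathbb{R}^n$ and to $\mathbb{R}^k$, and your proposed fix is to select $z_i\in\mathcal{R}_k(\tilde X_i)\cap B_1(y_i)$ whose $k$-tangent already resolves at a scale $\tau_0>0$ independent of $i$. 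This is not merely unjustified but impossible: since $\tilde X_i\to\mathbb{R}^n$, for any fixed $\tau_0>0$ and any choice of $z_i\in B_1(y_i)$ one has (along a subsequence with $z_i\to z$) $B_{\tau_0}^{\tilde d_i}(z_i)\stackrel{GH}{\to} B_{\tau_0}(z)\subset\mathbb{R}^n$, and a ball in $\mathbb{R}^n$ is bounded away in Gromov--Hausdorff distance from a ball in $\mathbb{R}^k$ when $k<n$. Hence for large $i$ \emph{no} point of $B_1(y_i)$ can look $k$-Euclidean at the fixed scale $\tau_0$. The Mondino--Naber covering you invoke gives scale uniformity across points of a \emph{single} space, not across the sequence $\{\tilde X_i\}$, and the observation just made shows that the across-sequence uniformity your fix requires is outright false. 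As written, the window $\epsilon_i\ll s_i\ll\tau_i$ may well be empty and the contradiction is not reached. A workable route avoids the second blow-up altogether and instead confronts the measure convergence $\tilde\meas_i\to\omega_n^{-1}\mathcal{L}^n$ with structure-theoretic constraints on spaces of essential dimension $k$ (for instance the behaviour of $r\mapsto\tilde\meas_i(B_r(\cdot))$ or absolute continuity of $\tilde\meas_i$ with respect to $\mathcal{H}^k$).
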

\subsection{Rigidity for positively Ricci curved $\RCD$ spaces}
%In this subsection we recall some rigidity results for $\RCD(N-1, N)$ space due to Ketterer. We recall that the statements in \cite{Ketterer}, \cite{Ketterer} are for spaces satisfying the $\RCD^*(N-1,N)$ condition, however, it is equivalent to $\RCD(N-1, N)$ condition (see Corollary 13.7 in \cite{CavMil}).\footnote{SH: The previous sentences are ``{\color{blue}We notice that the statements in \cite{Ketterer}, \cite{Ketterer} are for spaces satisfying the $\RCD^*(N-1,N)$ condition. However, 
%Cavalletti-Milman proved that if the measure is finite, $\RCD^*$ is equivalent to $\RCD$, so for the sake of simplicity we do not refer to the $\RCD^*$ condition in the following.}''. It seems to me that this might be misleading for readers because they proved that $\RCD^*$ are $\RCD$ in \cite[Cor.13.7]{CavMil} without finiteness of total measure, where we actually use the equivalence for noncompact spaces under blow-up arguments. So I rewrite it as this because we already gave a short remark about the equivalence just after Definition \ref{def:rcd}. I would be happy if you could modify/change it freely again under your consideration.} 
It is worth pointing out that in general if a $\RCD(K ,N)$ space $(X, d, \meas)$ has a bounded diameter, then $(X, d)$ must be compact and the spectrum of the (minus) Laplacian $-\Delta$ is discrete and unbounded;
\begin{equation}
0=\lambda_0<\lambda_1 \le \lambda_2 \le \cdots \to \infty,
\end{equation} 
where $\lambda_i$ denotes the $i$-th eigenvalue counted with multiplicities (see for instance \cite{GigliMondinoSavare} or \cite{AmbrosioHondaTewodrosePortegies}).
Let us introduce some properties of $\RCD(N-1, N)$ spaces with the rigidity results we will use later (\cite[Cor.1.3, Cor.1.6]{Ketterer}, \cite[Thm.1.4]{Ketterer2}).
\begin{theorem}[Rigidity to the sphere]\label{thm:ket}
Let $(X, d, \meas)$ be a $\RCD(N-1, N)$ space. Then the following are satisfied:
\begin{enumerate}
\item The diameter $\mathrm{diam}(X, d)$ is at most $\pi$ (in particular $\mathrm{rad}(X, d) \le \pi$) and the first positive eigenvalue $\lambda_1$ is at least $N$.
\item If $\mathrm{rad}(X, d)=\pi$ or $\lambda_{[N]+1}=N$, where $[N]$ is the integer part of $N$, then $N$ must be an integer, $(X, d)$ is isometric to $(\mathbb{S}^N, d_{\mathbb{S}^N})$ and $\meas=a \mathcal{H}^N$ for some $a>0$.  
\end{enumerate}
\end{theorem}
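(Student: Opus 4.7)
The plan is to exploit the Bakry-\'Emery inequality (\ref{eq:boch}) as the single analytic tool, using it first in its quantitative form to derive the sharp diameter and first eigenvalue bounds in (1), and then in the form of equality/rigidity to deduce (2). For the diameter estimate, note that $\RCD(N-1,N)$ implies $\CD(N-1,N)$ in the Lott-Sturm-Villani sense, so Sturm's generalized Bonnet-Myers theorem gives $\diam(X,d)\le\pi\sqrt{(N-1)/(N-1)}=\pi$, and in particular $\mathrm{rad}(X,d)\le\pi$. For the Lichnerowicz bound $\lambda_1\ge N$, I would test (\ref{eq:boch}) with $f$ a first eigenfunction ($\Delta f=-\lambda_1 f$) and $\phi$ a non-negative cut-off converging to $1$ (permissible via the good cut-offs available on compact $\RCD$ spaces). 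The left-hand side vanishes in the limit, integration by parts converts $\int\phi\,\Gamma(f,\Delta f)\,d\meas$ into $-\lambda_1\int\Gamma(f,f)\,d\meas=-\lambda_1^2\int f^2\,d\meas$, and combined with $\int\Gamma(f,f)\,d\meas=\lambda_1\int f^2\,d\meas$ the resulting inequality collapses to $\lambda_1\tfrac{N-1}{N}(N-\lambda_1)\le 0$, whence $\lambda_1\ge N$.

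For the rigidity in (2), the key input is Ketterer's maximal diameter theorem, which states that an $\RCD(N-1,N)$ space attaining $\diam=\pi$ is (mm-)isomorphic to the spherical $N$-suspension of some $\RCD(N-2,N-1)$ space $(Y,d_Y,\meas_Y)$. Under the hypothesis $\mathrm{rad}(X,d)=\pi$, assertion (1) forces $\diam(X,d)=\pi$, so one obtains such a suspension structure with two poles $p^\pm$ at distance $\pi$. The condition $\mathrm{rad}=\pi$ means that \emph{every} point of $X$ has an antipode at distance $\pi$, and a direct computation with the warped-product distance shows that a non-polar point $(t,y)$ admits an antipode in $X$ if and only if $y$ itself has a partner $y'\in Y$ with $d_Y(y,y')=\pi$, in which case the antipode is $(\pi-t,y')$. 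Hence $(Y,d_Y,\meas_Y)$ again satisfies $\mathrm{rad}=\pi$, and one may iterate. Each iteration strictly drops the dimension parameter in the $\RCD$ class by one; the iteration must terminate in a zero-dimensional space $\s^0$, which forces $N$ to be a positive integer and realizes $(X,d)$ as the corresponding iterated spherical suspension, i.e.\ $(\s^N,d_{\s^N})$. The explicit form of the suspension measure identifies $\meas$ with $a\mathcal H^N$ for some $a>0$.

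For the eigenvalue case $\lambda_{[N]+1}=N$, assertion (1) together with the hypothesis produces $[N]+1$ linearly independent eigenfunctions with eigenvalue exactly $N$. Each such $f$ achieves equality in the Lichnerowicz argument above, so by Ketterer's rigidity for the Bakry-\'Emery inequality it induces a spherical suspension splitting of $X$ in which $f$ plays the role of a height function. Peeling off one suspension factor per independent eigenfunction collapses the base after $[N]+1$ steps, forcing once more $N=[N]\in\N$ and $(X,d,\meas)\cong(\s^N,d_{\s^N},a\mathcal H^N)$. The main obstacle I expect is carrying the inductive argument cleanly through the $\RCD$ category: one must check at each step that the base inherits exactly the right $\RCD$ parameters \emph{and} the right rigidity hypothesis (radius, or enough eigenfunctions with eigenvalue equal to the new curvature-dimension bound), and that the suspension measure propagates correctly. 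This bookkeeping is precisely what Ketterer's maximal-diameter and eigenvalue-rigidity theorems are designed to handle, so modulo invoking them the proof reduces to the iteration sketched above.
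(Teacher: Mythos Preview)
The paper does not prove this theorem at all: it is quoted directly from the literature (the sentence introducing it cites \cite[Cor.1.3, Cor.1.6]{Ketterer} and \cite[Thm.1.4]{Ketterer2}) and is used throughout as a black box. So there is no ``paper's own proof'' to compare against; what you have written is essentially a sketch of how Ketterer's original arguments run, which the present paper never attempts to reproduce.

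As a reconstruction of Ketterer's proofs your outline is broadly correct. The Bonnet--Myers and Lichnerowicz bounds in (1) follow exactly as you say. For (2), the iterative suspension argument is the right idea, but two points would need to be made precise in a full proof. First, in the radius case the termination step is delicate: after $k$ iterations you land in an $\RCD(N-k-1,N-k)$ space with $N-k$ possibly non-integer, and you must argue carefully that diameter $\pi$ in the final low-dimensional step forces $N-k\in\{0,1\}$ exactly (so that the last factor is a point or $[0,\pi]$), rather than simply asserting it. Second, in the eigenvalue case the phrase ``peeling off one suspension factor per independent eigenfunction'' hides real work: after one splitting $X\cong[0,\pi]\times_{\sin}Y$, you must show that the remaining $[N]$ eigenfunctions, after subtracting their component along the height function, descend to eigenfunctions on $Y$ with eigenvalue $N-1$, so that the hypothesis regenerates on the base. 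This descent is exactly what \cite{Ketterer2} establishes and is not automatic from the Bakry--\'Emery equality alone.
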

The next two corollaries give us reasons for only discussing the case of integer $n$ in Theorem \ref{mthm} and Corollary \ref{thm:eigenhomeo}.
\begin{cor}
For all $N \in [1, \infty) \setminus \mathbb{N}$, there exists a positive constant $\tau_N>0$ such that any $\RCD(N-1, N)$ space $(X, d, \meas)$ satisfies
$\lambda_{[N]+1} \geq N + \tau_N$.
\end{cor}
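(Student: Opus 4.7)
The plan is to argue by contradiction using a compactness-and-rigidity scheme. Suppose no such $\tau_N$ exists; then there is a sequence of $\RCD(N-1, N)$ spaces $(X_i, d_i, \meas_i)$ with $\lambda_{[N]+1}(X_i, d_i, \meas_i) \to N$. Since the Cheeger energy and the $L^2$-norm scale by the same positive factor when $\meas_i$ is rescaled by a constant, the spectrum of $-\Delta$ is invariant under such rescaling, so I may normalize so that $\meas_i(X_i) = 1$. By Theorem \ref{thm:ket}(1) each $X_i$ has diameter at most $\pi$, and in particular is compact with $\meas_i(B_\pi(x_i)) = 1$ for every $x_i \in X_i$.

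Next, I will apply Theorem \ref{thm:comprcd} to extract a pmGH-convergent subsequence with limit $(X, d, \meas)$, which is again a $\RCD(N-1, N)$ space, of total mass $1$ and diameter at most $\pi$. Then I will invoke the spectral continuity for compact $\RCD$ spaces under mGH convergence (Gigli-Mondino-Savar\'e, \cite{GigliMondinoSavare}) to pass the eigenvalue to the limit and obtain $\lambda_{[N]+1}(X, d, \meas) = N$. Combined with the Lichnerowicz lower bound $\lambda_1(X, d, \meas) \geq N$ from Theorem \ref{thm:ket}(1), this shows that the $([N]+1)$-st eigenvalue of the limit space saturates the sharp lower bound. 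Finally, the rigidity statement Theorem \ref{thm:ket}(2) forces $N$ to be an integer, contradicting $N \in [1,\infty) \setminus \mathbb{N}$.

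The main subtlety I anticipate is ensuring that the limit $(X, d, \meas)$ is non-degenerate, so that $\lambda_{[N]+1}(X, d, \meas)$ is a genuine finite eigenvalue rather than $+\infty$: if $X$ were a single point, only the constant function would lie in $H^{1,2}$ and $\lambda_k$ would be undefined for $k \geq 1$, which would be incompatible with the uniform upper bound $\lambda_{[N]+1}(X_i) \leq N + 1$. The normalization $\meas_i(X_i) = 1$ together with the uniform diameter bound guarantees that no mass escapes at infinity and rules out collapse to a point, so the spectral convergence theorem applies in the form we need.
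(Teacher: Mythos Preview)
Your proof is correct and follows essentially the same approach as the paper: argue by contradiction, extract a convergent subsequence via the compactness theorem, pass the eigenvalue to the limit using the spectral convergence result of Gigli--Mondino--Savar\'e, and invoke Ketterer's rigidity to force $N \in \mathbb{N}$. You are somewhat more explicit about the normalization and about ruling out degeneration of the limit, but the underlying argument is identical.
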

\begin{proof}
The proof is done by a contradiction. If the statement is not satisfied, then there exists a sequence of $\RCD(N-1, N)$ spaces $(X_i, d_i, \meas_i)$ such that 
\begin{equation}\label{eq:gap}
\lim_{i \to \infty}\lambda_{[N]+1}(X_i, d_i, \meas_i)=N.
\end{equation}
By Theorem \ref{thm:comprcd} with no loss of generality we can assume that $(X_i, d_i, \meas_i)$ mGH-converge to a $\RCD(N-1, N)$ space $(X, d, \meas)$. Then the spectral convergence result proved in \cite[Thm.7.8]{GigliMondinoSavare} with (\ref{eq:gap}) yields $$\lambda_{[N]+1}(X, d, \meas)=\lim_{i \to \infty}\lambda_{[N]+1}(X_i, d_i, \meas_i)=N.$$ Theorem \ref{thm:ket} shows that $N$ must be an integer, which is a contradiction.
\end{proof}
Similarly we have the following.
\begin{cor}
For all $N \in [1, \infty) \setminus \mathbb{N}$, there exists a positive constant $\delta_N>0$ such that any $\RCD(N-1, N)$ space $(X, d, \meas)$ satisfies
$\mathrm{rad}(X, d) \leq \pi - \delta_N$.
\end{cor}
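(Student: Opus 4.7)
The plan is to mirror the contradiction argument used in the immediately preceding corollary, but with the radius playing the role of the $([N]+1)$-th eigenvalue. Suppose the statement fails. Then there is a sequence of $\RCD(N-1,N)$ spaces $(X_i,d_i,\meas_i)$ with
\begin{equation}
\lim_{i\to\infty}\mathrm{rad}(X_i,d_i)=\pi.
\end{equation}
By Theorem \ref{thm:ket}(1) each $X_i$ has diameter at most $\pi$, so each $X_i$ is compact.

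Next I would normalize the measures by setting $\widetilde{\meas}_i:=\meas_i/\meas_i(X_i)$, which keeps the $\RCD(N-1,N)$ condition. Pick any $x_i\in X_i$. Since $\mathrm{diam}(X_i,d_i)\le\pi$, we have $B_\pi(x_i)=X_i$, hence $\widetilde{\meas}_i(B_\pi(x_i))=1$; on the other hand the Bishop-Gromov inequality applied in the $\RCD(N-1,N)$ setting yields a uniform positive lower bound for $\widetilde{\meas}_i(B_1(x_i))/\widetilde{\meas}_i(B_\pi(x_i))$ depending only on $N$. In particular both $v_1$ and $v_2$ in Theorem \ref{thm:comprcd} exist, and up to a subsequence the pointed spaces $(X_i,d_i,\widetilde{\meas}_i,x_i)$ pmGH-converge to some $\RCD(N-1,N)$ space $(X,d,\meas,x)$; because diameters are uniformly bounded by $\pi$, this convergence is in fact mGH.

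Using the continuity of the radius with respect to GH convergence recalled in the preliminary section, we get
\begin{equation}
\mathrm{rad}(X,d)=\lim_{i\to\infty}\mathrm{rad}(X_i,d_i)=\pi.
\end{equation}
Theorem \ref{thm:ket}(2) then forces $N$ to be an integer, contradicting the assumption $N\in[1,\infty)\setminus\mathbb{N}$.

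The only nontrivial point I foresee is ensuring that Theorem \ref{thm:comprcd} can actually be applied, that is, producing the uniform two-sided bound on $\widetilde{\meas}_i(B_1(x_i))$; this is the step where one uses the compactness granted by $\mathrm{diam}\le\pi$ together with the Bishop-Gromov inequality to absorb the flexibility of the reference measures. Once the convergence is in place, the rigidity statement in Theorem \ref{thm:ket}(2) closes the argument directly.
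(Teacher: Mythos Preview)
Your proof is correct and follows essentially the same approach as the paper, which simply says ``Similarly we have the following'' and leaves the details to the reader. You have filled in exactly the expected argument: contradiction via compactness (Theorem \ref{thm:comprcd}) and continuity of the radius under GH convergence in place of spectral convergence, followed by the rigidity in Theorem \ref{thm:ket}(2) forcing $N\in\mathbb{N}$.
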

\subsection{Non-collapsed $\RCD$ space}
Let us introduce a special class of $\RCD$ spaces introduced in \cite{DePhilippisGigli}, that is non-collapsed and weakly non-collapsed $\RCD$ spaces. The non-collapsing assumption means that the measure $\meas$ is chosen to coincide, or to be absolutely continuous, with respect to the Hausdorff measure. More precisely we have: 

%which are main targets of the paper.
\begin{definition}[Non-collapsed $\RCD$ space]
Let $(X, d, \meas)$ be a $\RCD(K, N)$ space.
\begin{enumerate}
\item $(X, d, \meas)$ is called \textit{non-collapsed} if $\meas=\mathcal{H}^N$.
\item $(X, d, \meas)$ is called \textit{weakly non-collapsed} if $\meas \ll \mathcal{H}^N$.
\end{enumerate}
\end{definition}
The following fundamental results for non-collapsed $\RCD(K, N)$ spaces are proved in \cite{DePhilippisGigli};
\begin{theorem}[Fine properties of non-collapsed $\RCD$ spaces]\label{thm:noncol}
Let $(X, d, \mathcal{H}^N)$ be a non-collapsed $\RCD(K, N)$ space. Then the following are satisfied;
\begin{enumerate}
\item $N$ must be an integer;
\item For all $x \in X$ the Bishop inequality holds in the sense of 
\begin{equation}\label{eq:bishop0}
\lim_{r \to 0^+}\frac{\mathcal{H}^N(B_r(x))}{\mathrm{Vol}_{K, N}(r)}\le 1,
\end{equation}
where $\mathrm{Vol}_{K, N}(r)$ denotes the volume of a ball of radius $r$ in the $N$-dimensional space form whose Ricci curvature is constant equal to $K$. Moreover the equality in (\ref{eq:bishop0}) holds if and only if $x \in \mathcal{R}_N$.
\end{enumerate}
\end{theorem}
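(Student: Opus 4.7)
The plan is to combine essential-dimension theory with Bishop--Gromov monotonicity and the general metric density theorem for Hausdorff measures. Set $k := \mathrm{dim}_{d,\meas}(X) \in \mathbb{N} \cap [1,N]$; since $\meas = \mathcal{H}^N$ has full support, $\mathcal{R}_k$ is dense and $\mathcal{H}^N$-full in $X$. The first step is to rule out $k < N$, which will simultaneously force $N$ to be an integer.

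For (1) I argue by contradiction: suppose $k < N$. At any $x \in \mathcal{R}_k$, the definition of regular point gives the pmGH convergence $(X, r^{-1}d, \meas(B_r(x))^{-1}\meas, x) \to (\mathbb{R}^k, d_{\mathbb{R}^k}, \omega_k^{-1}\mathcal{L}^k, 0_k)$ as $r \to 0^+$. Applied to balls this yields $\mathcal{H}^N(B_{sr}(x))/\mathcal{H}^N(B_r(x)) \to s^k$ for every $s \in (0,1)$, so $\mathcal{H}^N(B_r(x)) = c(x) r^k(1+o(1))$ with $c(x) \in (0,\infty)$; in particular $\mathcal{H}^N(B_r(x))/r^N \to +\infty$. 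However, a standard covering argument in metric spaces, using the definition of $\mathcal{H}^N$ via infima over small-diameter covers, yields the upper density bound $\limsup_{r\to 0^+} \mathcal{H}^N(B_r(x))/(\omega_N r^N) \leq 1$ at $\mathcal{H}^N$-a.e.\ $x$. Since $\mathcal{H}^N$-almost every point is $k$-regular, this is a contradiction, so $k = N$ and hence $N \in \mathbb{N}$.

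For (2), once $k = N$, the Bishop--Gromov inequality on an $\RCD(K,N)$ space gives that $r \mapsto \mathcal{H}^N(B_r(x))/\mathrm{Vol}_{K,N}(r)$ is non-increasing for every $x \in X$. Combined with $\mathrm{Vol}_{K,N}(r) \sim \omega_N r^N$ as $r \to 0^+$ and the upper density bound above, this yields the limit inequality (\ref{eq:bishop0}) at $\mathcal{H}^N$-a.e.\ $x$; for an arbitrary $x \in X$, monotonicity together with lower semi-continuity of the density under pmGH tangents propagate the bound. For the equality case, if the limit equals $1$ then monotonicity forces $\mathcal{H}^N(B_r(x))/\mathrm{Vol}_{K,N}(r) \equiv 1$ on a right-neighborhood of $0$, i.e.\ the ball is a volume cone; Ketterer's volume cone rigidity \cite{Ketterer} then produces a local metric cone structure, and iterating this together with part (1) identifies any tangent cone at $x$ with $\mathbb{R}^N$, so $x \in \mathcal{R}_N$. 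The converse direction is immediate from the definition of $\mathcal{R}_N$ and the computation of densities on Euclidean space.

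The main obstacle I expect is justifying the strict positivity $c(x) > 0$ of the rescaled density, equivalently upgrading pmGH convergence of rescaled pointed spaces to actual volume convergence of balls — without it, the contradiction in (1) collapses. In the $\RCD$ setting this is delivered by the uniform Bishop--Gromov comparison, which prevents degeneration of $\meas(B_r(x))$ and allows one to pass from renormalized measure convergence to control on absolute volumes. The equality case in (2) similarly hinges on the rigid cone characterization of \cite{Ketterer}, which supplies the essential analytic input beyond the purely measure-theoretic argument for the inequality.
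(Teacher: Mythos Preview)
The paper does not prove Theorem~\ref{thm:noncol} itself; it quotes the statement from De~Philippis--Gigli \cite{DePhilippisGigli}. So there is no ``paper's own proof'' to compare against, and I will assess your argument directly.

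Your treatment of part~(1) is sound in spirit: the metric density theorem for Hausdorff measures gives $\limsup_{r\to 0}\mathcal{H}^N(B_r(x))/(\omega_N r^N)\le 1$ at $\mathcal{H}^N$-a.e.\ $x$, while $k$-regularity with $k<N$ forces this ratio to diverge. The inference ``$\mathcal{H}^N(B_{sr}(x))/\mathcal{H}^N(B_r(x))\to s^k$ hence $\mathcal{H}^N(B_r(x))\sim c(x)r^k$'' is not immediate as written, but Bishop--Gromov monotonicity of $r\mapsto \mathcal{H}^N(B_r(x))/\mathrm{Vol}_{K,N}(r)$ makes it rigorous: the monotone limit exists, and if it were finite the ratio would tend to $s^N$, not $s^k$. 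Your passage from the a.e.\ Bishop inequality to the pointwise one in~(2) is phrased vaguely, but there is a clean fix you almost state: by Bishop--Gromov, $\theta(x):=\lim_{r\to 0}\mathcal{H}^N(B_r(x))/\mathrm{Vol}_{K,N}(r)=\sup_{r>0}\mathcal{H}^N(B_r(x))/\mathrm{Vol}_{K,N}(r)$ is a supremum of lower semicontinuous functions of $x$, hence itself lower semicontinuous; then $\{\theta>1\}$ is open, and if nonempty it would have positive $\mathcal{H}^N$-measure, contradicting the a.e.\ bound.

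The genuine gap is in the equality case. You assert that ``if the limit equals $1$ then monotonicity forces $\mathcal{H}^N(B_r(x))/\mathrm{Vol}_{K,N}(r)\equiv 1$ on a right-neighbourhood of $0$''. This is false: a non-increasing function with limit $1$ at $0$ need not equal $1$ anywhere (take $r\mapsto 1-r$). So you do not get a volume cone on any positive scale, and Ketterer's ``volume cone implies metric cone'' cannot be invoked in the way you describe. The correct route, which is what \cite{DePhilippisGigli} do, is to pass to a tangent cone: one needs the non-collapsed compactness (Theorem~\ref{thm:compnon}) to know that every tangent cone at $x$ is itself a non-collapsed $\RCD(0,N)$ metric cone $(C(Z),\mathcal{H}^N)$, and then $\theta(x)=1$ translates into $\mathcal{H}^N(B_1(o))=\omega_N$ in the cone, which forces $\mathcal{H}^{N-1}(Z)=\mathcal{H}^{N-1}(\mathbb{S}^{N-1})$ and hence $Z\cong \mathbb{S}^{N-1}$, i.e.\ $x\in\mathcal{R}_N$. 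The stability of the non-collapsed condition under blow-up is the essential analytic input you are missing, not the volume-cone rigidity on the original space.
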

Let us give several remarks on the theorem above. The first property (1) is also true for weakly non-collapsed $\RCD(K, N)$ spaces.
The second property (2) can be regarded as a rigidity result. Moreover it is proven that the \textit{almost} rigidity of (\ref{eq:bishop0}) also holds, and this will play a role in the next section. See \cite[Thm.1.6]{DePhilippisGigli} for the precise statement (see also \cite[Prop.6.6]{AmbrosioHondaTewodrosePortegies}).

The next theorem follows from a combination of \cite{Kita} and \cite{DePhilippisGigli}.
For the reader's convenience we give a proof:
\begin{theorem}\label{thm:topdim}
For $n \in \mathbb{N}_{\ge 2}$, a $\RCD(K, n)$ space $(X, d, \meas)$ is weakly non-collapsed if and only if $\mathcal{R}_n \neq \emptyset$.
\end{theorem}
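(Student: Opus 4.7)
The plan is to establish the two implications separately, using Theorem \ref{thm:lower} (lower semicontinuity of essential dimensions, from \cite{Kita}) for the backward direction and the structure results of \cite{DePhilippisGigli} for both directions.

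For $(\Leftarrow)$: Fix $x \in \mathcal{R}_n$ and consider the pmGH-convergent sequence
\begin{equation*}
(X, r^{-1}d, \meas(B_r(x))^{-1}\meas, x) \stackrel{pmGH}{\to} (\R^n, d_{\R^n}, \omega_n^{-1}\mathcal{L}^n, 0_n) \quad (r \to 0^+),
\end{equation*}
which is the very definition of $\mathcal{R}_n$. Each rescaled space is $\RCD(K r^2, n)$, hence uniformly $\RCD(\min(K,0), n)$ for small $r$, and the essential dimension is invariant under such rescaling since the regular sets are unchanged. The Euclidean limit has essential dimension $n$, so Theorem \ref{thm:lower} gives $\dimnew(X) \geq n$; together with the trivial upper bound $\dimnew(X) \leq n$ for $\RCD(K,n)$ spaces this forces $\dimnew(X) = n$ and $\meas(X \setminus \mathcal{R}_n) = 0$. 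To upgrade this to $\meas \ll \mathcal{H}^n$, I invoke the density result of \cite{DePhilippisGigli}: at $\meas$-a.e.\ $y \in \mathcal{R}_n$ the density $\theta(y) := \lim_{r \to 0^+} \meas(B_r(y))/(\omega_n r^n)$ exists and is finite, so by the standard differentiation theorem $\meas|_{\mathcal{R}_n} = \theta\, \mathcal{H}^n|_{\mathcal{R}_n}$, and combined with $\meas(X \setminus \mathcal{R}_n)=0$ we conclude $\meas \ll \mathcal{H}^n$.

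For $(\Rightarrow)$: Assume $\meas \ll \mathcal{H}^n$. The case $|X| = 1$ is vacuous since then $\mathcal{H}^n(\{x\}) = 0$ would force $\meas = 0$, contradicting full support. So $k := \dimnew(X) \in \{1,\dots,n\}$ is well-defined with $\meas(\mathcal{R}_k) = \meas(X) > 0$. Suppose for contradiction $k < n$. By the Mondino-Naber-type rectifiability used in \cite{DePhilippisGigli} (and references therein), there is a $\meas$-null set $N \subset \mathcal{R}_k$ such that $\mathcal{R}_k \setminus N$ is contained in a countable union of bi-Lipschitz images of Borel subsets of $\R^k$. Since $n > k$, each such image has $\mathcal{H}^n$-measure zero, so $\mathcal{H}^n(\mathcal{R}_k \setminus N) = 0$. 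The hypothesis $\meas \ll \mathcal{H}^n$ then gives $\meas(\mathcal{R}_k \setminus N) = 0$, and with $\meas(N) = 0$ this contradicts $\meas(\mathcal{R}_k) = \meas(X) > 0$. Hence $k = n$, so $\mathcal{R}_n$ has full $\meas$-measure and is in particular non-empty.

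The main obstacle is the backward direction: the lower semicontinuity step is clean once one checks the uniform $\RCD$ bound on the rescaled sequence, but the passage from $\dimnew(X) = n$ to $\meas \ll \mathcal{H}^n$ genuinely requires the finite-density result of \cite{DePhilippisGigli} at almost every $n$-regular point. By comparison the forward direction reduces to the fact, also drawn from the rectifiability theory, that $\mathcal{R}_k$ has vanishing $\mathcal{H}^n$-measure whenever $k < n$.
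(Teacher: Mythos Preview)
Your proof is correct and follows essentially the same route as the paper. For the backward implication both you and the paper use the lower semicontinuity of the essential dimension (Theorem~\ref{thm:lower}) applied to the blow-up sequence, then invoke \cite[Thm.~1.10]{DePhilippisGigli} to pass from $\dimnew(X)=n$ to $\meas\ll\mathcal{H}^n$; you unpack this last step via the finite-density and differentiation argument, whereas the paper cites the result directly. For the forward implication the paper simply defers to \cite[Thm.~1.10]{DePhilippisGigli}, while you supply an explicit argument via $k$-rectifiability of $\mathcal{R}_k$ (from Mondino--Naber) and the trivial vanishing of $\mathcal{H}^n$ on $k$-rectifiable sets when $k<n$---this is a legitimate alternative, slightly more self-contained, but drawing on the same circle of structure results.
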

\begin{proof}
We check only the ``if'' part because  the ``only if'' part is a direct consequence of \cite[Thm.1.10]{DePhilippisGigli}.
Let $x \in \mathbb{R}_n$. Then since Theorem \ref{thm:lower} yields
$$
\liminf_{r \to 0^+}\mathrm{dim}_{ r^{-1}d, (\meas (B_r(x)))^{-1}\meas}(X) \ge \mathrm{dim}_{d_{\mathbb{R}^n}, \omega_n^{-1}\mathcal{L}^n}(\mathbb{R}^n)=n,
$$
we have $\mathrm{dim}_{d, \meas}(X)=n$ because $\mathrm{dim}_{d, \meas}(X)=\mathrm{dim}_{ r^{-1}d, (\meas (B_r(x)))^{-1}\meas}(X)$ for all $r>0$.
Then \cite[Thm.1.10]{DePhilippisGigli} yields that $(X, d, \meas)$ is weakly non-collapsed.
\end{proof}
Let us introduce one of the main results of \cite{Honda19} which confirmed a conjecture raised in \cite{DePhilippisGigli} in the compact case (\cite[Cor.1.4]{Honda19});
\begin{theorem}[``Weakly non-collapsed'' implies ``non-collapsed'']\label{thm:weak}
Let $(X, d, \meas)$ be a compact weakly non-collapsed $\RCD(K, n)$ space. Then 
$$
\meas =\frac{\meas (X)}{\mathcal{H}^n(X)}\mathcal{H}^n.
$$
\end{theorem}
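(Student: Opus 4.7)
Since by hypothesis $\meas\ll\mathcal{H}^n$, I would begin by writing $\meas=\theta\,\mathcal{H}^n$ for some nonnegative Borel density $\theta$. The goal then reduces to showing that $\theta$ is $\mathcal{H}^n$-a.e.\ equal to a positive constant $c$, in which case integration over $X$ identifies $c=\meas(X)/\mathcal{H}^n(X)$. Theorem~\ref{thm:topdim} gives $\mathrm{dim}_{d,\meas}(X)=n$ and $\meas(X\setminus\mathcal{R}_n)=0$, so the whole analysis can be localized on the regular set $\mathcal{R}_n$.

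The first step is to obtain a pointwise representation of $\theta$. At each $x\in\mathcal{R}_n$ the rescaled pointed spaces $(X,r^{-1}d,x)$ pGH-converge to $(\R^n,d_{\R^n},0_n)$, and the almost-rigidity companion of Theorem~\ref{thm:noncol}(2) proved in \cite[Thm.1.6]{DePhilippisGigli} yields
$$
\lim_{r\to 0^+}\frac{\mathcal{H}^n(B_r(x))}{\omega_n r^n}=1.
$$
Combined with Lebesgue differentiation for the doubling measure $\meas$ (Bishop--Gromov being available in the $\RCD(K,n)$ setting), this gives
$$
\theta(x)=\lim_{r\to 0^+}\frac{\meas(B_r(x))}{\omega_n r^n}\qquad\text{for $\meas$-a.e. }x,
$$
so $\theta$ agrees $\meas$-a.e.\ with the genuine volume density of $\meas$, a quantity determined purely by small-scale metric-measure geometry.

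The heart of the argument, and the main obstacle, is to promote this pointwise information into the conclusion that $\theta$ is constant. Compactness of $X$ is essential here: the analogue for non-compact weakly non-collapsed spaces is precisely the open De Philippis--Gigli conjecture. The natural strategy is to compare the rescalings at two regular points $x,y\in\mathcal{R}_n$, both of which produce the same pmGH limit $(\R^n,d_{\R^n},\omega_n^{-1}\mathcal{L}^n,0_n)$ after normalizing $\meas$ by $\meas(B_r(\cdot))^{-1}$. A covering argument along a chain of regular points connecting $x$ to $y$ --- available because $\mathcal{R}_n$ is dense in $X$ and $X$ is compact --- should then propagate equality of densities, forcing $\theta(x)=\theta(y)$. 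Making this rigorous is delicate; the route of \cite{Honda19} uses heat-kernel asymptotics: the small-time expansion $p_t(x,x)\sim\theta(x)^{-1}(4\pi t)^{-n/2}$ at regular points, together with the spectral identity $\int_X p_t(x,x)\,d\meas(x)=\sum_k e^{-\lambda_k t}$ and the compactness in Theorem~\ref{thm:comprcd}, constrains $\theta$ globally.

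Once $\theta\equiv c$ holds $\meas$-a.e.\ (hence $\mathcal{H}^n$-a.e.\ on $\mathcal{R}_n$, which carries the full $\mathcal{H}^n$-mass by the identification above), integrating yields $\meas(X)=c\,\mathcal{H}^n(X)$, identifying $c=\meas(X)/\mathcal{H}^n(X)$ and completing the proof.
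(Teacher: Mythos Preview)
The paper does not prove this theorem: it is quoted from \cite[Cor.~1.4]{Honda19} as one of the main results of that reference, which confirmed the De~Philippis--Gigli conjecture in the compact case. There is therefore no in-paper proof to compare against.

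Regarding your proposal on its own merits: the reduction to showing that the Radon--Nikodym density $\theta=d\meas/d\mathcal{H}^n$ is constant is the right framing, and you correctly flag this step as the crux. But neither of the two mechanisms you sketch actually closes the gap. The chain-of-regular-points idea is circular: that the \emph{normalized} blow-ups at $x$ and at $y$ both pmGH-converge to $(\R^n,\omega_n^{-1}\mathcal{L}^n)$ says nothing about the ratio $\theta(x)/\theta(y)$, precisely because the normalization by $\meas(B_r(\cdot))^{-1}$ erases the density --- this is exactly why the conjecture is nontrivial, and compactness of $X$ does not by itself make this argument work. As for the heat-kernel paragraph, integrating the pointwise asymptotic $p_t(x,x)\sim\theta(x)^{-1}(4\pi t)^{-n/2}$ against $d\meas=\theta\,d\mathcal{H}^n$ produces only the scalar relation $\int_X p_t(x,x)\,d\meas\sim (4\pi t)^{-n/2}\mathcal{H}^n(X)$, a single constraint that cannot force $\theta$ to be constant. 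The actual argument in \cite{Honda19} requires substantially more structure (a new second-order differential operator, as the title of that paper indicates); you have not reproduced any of it, so what you have written is an outline of where the difficulty lies rather than a proof.
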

To conclude this section, we introduce a compactness result for non-collapsed $\RCD$ spaces with respect to pmGH convergence, which is proved in \cite[Thm.1.2, Thm.1.3]{DePhilippisGigli} (Compare with Theorem \ref{thm:comprcd});
\begin{theorem}[Compactness of non-collapsed $\RCD$ spaces]\label{thm:compnon}
Let $(X_i, d_i, \mathcal{H}^n, x_i) (i=1, 2, \ldots)$ be a sequence of pointed non-collapsed $\RCD(K, n)$ spaces with $$\liminf_{i \to \infty}\mathcal{H}^n(B_1(x_i))>0.$$ Then there exist a subsequence $(X_{i(j)}, d_{i(j)}, \mathcal{H}^n, x_{i(j)})$ and a pointed non-collapsed $\RCD (K, n)$ space $(X, d, \mathcal{H}^n, x)$ such that 
$$(X_{i(j)}, d_{i(j)}, \mathcal{H}^n, x_{i(j)}) \stackrel{pmGH}{\to} (X, d, \mathcal{H}^n, x).$$
\end{theorem}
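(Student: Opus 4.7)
The strategy is to first extract a pointed $\RCD(K,n)$ limit via the general compactness Theorem \ref{thm:comprcd}, and then to identify the limit measure with $\mathcal{H}^n$. The main obstacle is the second step: promoting weak non-collapsing of the limit to genuine non-collapsing with the correct normalization.

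For the extraction, Bishop-Gromov on each $\RCD(K,n)$ space gives the uniform upper bound $\mathcal{H}^n(B_1(x_i)) \le \mathrm{Vol}_{K,n}(1)$, and combined with the assumed $\liminf_i \mathcal{H}^n(B_1(x_i))>0$, Theorem \ref{thm:comprcd} produces a subsequence (not relabeled) pmGH-converging to some pointed $\RCD(K,n)$ space $(X,d,\meas,x)$ with $0<\meas(B_1(x))\le\mathrm{Vol}_{K,n}(1)$, using weak convergence of $\meas_i$ to $\meas$ on a continuity radius.

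To show weak non-collapsing of the limit, I would argue via essential dimension. For each $i$, Theorem \ref{thm:noncol}(2) implies that $\mathcal{H}^n$-a.e.\ point of $X_i$ is regular of top dimension, so $\mathcal R_n(X_i)\ne\emptyset$; Theorem \ref{thm:topdim} then forces the essential dimension of $(X_i,d_i,\mathcal{H}^n)$ to equal $n$. Lower semicontinuity of the essential dimension (Theorem \ref{thm:lower}) gives essential dimension $\ge n$ for the limit, while the general bound yields the opposite inequality; hence $\mathcal R_n(X)\ne\emptyset$, and a second application of Theorem \ref{thm:topdim} produces $\meas\ll\mathcal{H}^n$.

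The final and most delicate step is the identification $\meas=\mathcal{H}^n$. In the compact case Theorem \ref{thm:weak} gives at once $\meas=c\mathcal{H}^n$ for some constant $c>0$; in general one reduces to this on pre-compact balls $\overline{B_R(x)}$ and uses connectedness of $X$ (a consequence of uniform local doubling and the $\RCD$ geodesic structure) to obtain $\meas=\theta\mathcal{H}^n$ for a locally constant positive density $\theta$. To pin down $\theta\equiv 1$, I would invoke volume continuity at a regular point $y\in\mathcal R_n(X)$: choosing approximants $y_i\in X_i$ converging to $y$, the almost rigidity of Bishop's inequality (see the remarks following Theorem \ref{thm:noncol}) yields $\mathcal{H}^n(B_r(y_i))/\mathrm{Vol}_{K,n}(r)\to 1$ as $r\to 0^+$ uniformly in large $i$; passing to the mGH limit on continuity radii then gives $\meas(B_r(y))=(1+o(1))\omega_n r^n$ as $r\to 0^+$. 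Combining this with $\lim_{r\to 0^+} r^{-n}\mathcal{H}^n(B_r(y))=\omega_n$ at the regular point $y$ and the identity $\meas=\theta\mathcal{H}^n$ forces $\theta\equiv 1$, completing the proof that the limit is non-collapsed.
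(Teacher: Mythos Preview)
The paper does not prove this statement; it is quoted from \cite[Thm.~1.2, Thm.~1.3]{DePhilippisGigli}. So the comparison is between your argument and what is actually done there.

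Your extraction step and the weak non-collapsing of the limit via lower semicontinuity of the essential dimension are fine. The gap is in the identification $\meas=\mathcal{H}^n$. Theorem~\ref{thm:weak} applies to \emph{compact $\RCD(K,n)$ spaces}, not to compact subsets of such spaces: a closed ball $\overline{B_R(x)}$ in a non-compact $\RCD(K,n)$ space is in general not itself an $\RCD(K,n)$ space (it acquires boundary, the Sobolev-to-Lipschitz and Bakry--\'Emery conditions need not hold for the restricted structure). Hence your localization ``one reduces to this on pre-compact balls'' does not go through, and with the tools recorded in this paper you cannot conclude $\meas=\theta\,\mathcal{H}^n$ in the non-compact case from Theorem~\ref{thm:weak} alone.

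The route in \cite{DePhilippisGigli} is different and avoids this obstruction: one proves \emph{volume convergence} directly, i.e.\ under pGH convergence of non-collapsed $\RCD(K,n)$ spaces with a uniform lower volume bound one has $\mathcal{H}^n(B_r(x_i))\to \mathcal{H}^n(B_r(x))$ for every $r>0$. This uses the Bishop inequality together with its almost rigidity in the direction \emph{opposite} to the one you name (GH-closeness of a rescaled ball to a Euclidean ball forces the $\mathcal{H}^n$-density to be close to $1$, not the other way around). From volume convergence one reads off that the limit measure equals $\mathcal{H}^n$ on all balls, hence globally, without ever invoking ``weakly non-collapsed $\Rightarrow$ non-collapsed''. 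Your last paragraph in fact gestures at exactly this mechanism at a single regular point; the point is that this is already the whole argument, applied at every point, and Theorem~\ref{thm:weak} is not needed.
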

\section{Proof of main results}
Throughout the section we fix $n \in \mathbb{N}_{\ge 2}$ and $K \in \mathbb{R}$ too. In the next proposition let us consider Sturm's $\mathbb{D}$-distance between compact $\RCD(K, n)$ spaces, that is, ``$\epsilon$-mGH-close'' means ``$\mathbb{D}<\epsilon$'' below. Note that the convergence with respect to $\mathbb{D}$ is equivalent to the mGH convergence for compact $\RCD(K, N)$ spaces (see \cite{Sturm1, Sturm2}).
\begin{proposition}[Noncollapsed $\RCD$ is an open condition]\label{prop:open}
Let $(X, d_X, \meas_X)$ be a compact weakly non-collapsed $\RCD(K, n)$ space. Then there exists a positive constant $\epsilon_0=\epsilon_0(X, d_X, \meas_X)>0$ such that if a compact $\RCD(K, n)$ space $(Y, d_Y, \meas_Y)$ is $\epsilon_0$-mGH-close to $(X, d_X, \meas_X)$, then 
\begin{equation}\label{eq:measrigid}
\meas_Y=\frac{\meas_Y(Y)}{\mathcal{H}^n(Y)}\mathcal{H}^n.
\end{equation}
\end{proposition}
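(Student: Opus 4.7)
The plan is to argue by contradiction, reducing the proposition to the combination of three tools already collected in the excerpt: Theorem \ref{thm:weak} (every compact weakly non-collapsed $\RCD(K,n)$ space has $\meas$ proportional to $\mathcal{H}^n$), Theorem \ref{thm:topdim} (for a $\RCD(K,n)$ space, weakly non-collapsed is equivalent to $\mathcal{R}_n\neq\emptyset$), and Theorem \ref{thm:lower} (lower semicontinuity of the essential dimension under pmGH convergence).

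First I would apply Theorem \ref{thm:weak} to the hypothesis on $(X,d_X,\meas_X)$, obtaining $\meas_X = c\,\mathcal{H}^n$ with $c = \meas_X(X)/\mathcal{H}^n(X) > 0$; Theorem \ref{thm:topdim} then yields $\mathrm{dim}_{d_X,\meas_X}(X) = n$. Suppose no such $\epsilon_0$ exists: I would extract a sequence of compact $\RCD(K,n)$ spaces $(Y_i, d_{Y_i}, \meas_{Y_i})$ converging to $(X,d_X,\meas_X)$ in Sturm's $\mathbb{D}$-distance and for which the conclusion \eqref{eq:measrigid} fails for each $i$. Since on compact $\RCD(K,n)$ spaces $\mathbb{D}$-convergence coincides with mGH convergence, and the uniform bound $\mathrm{diam}(Y_i)\to \mathrm{diam}(X)<\infty$ makes the convergence pmGH for any choice of basepoints, we are in a position to invoke Theorem \ref{thm:lower}.

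By the contrapositive of Theorem \ref{thm:weak}, each $Y_i$ cannot be weakly non-collapsed, because otherwise $\meas_{Y_i}$ would be forced to equal $(\meas_{Y_i}(Y_i)/\mathcal{H}^n(Y_i))\,\mathcal{H}^n$. Theorem \ref{thm:topdim} then forces $\mathcal{R}_n(Y_i) = \emptyset$, hence $\mathrm{dim}_{d_{Y_i},\meas_{Y_i}}(Y_i) \le n-1$ for every $i$. Lower semicontinuity of the essential dimension (Theorem \ref{thm:lower}) applied to the pmGH-convergent sequence then gives
\[
n \;=\; \mathrm{dim}_{d_X,\meas_X}(X) \;\le\; \liminf_{i\to\infty}\, \mathrm{dim}_{d_{Y_i},\meas_{Y_i}}(Y_i) \;\le\; n-1,
\]
which is the desired contradiction.

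There is no genuinely hard step in this scheme: the proposition is a stability statement that stitches together the three cited results, whose proofs contain the real analytic content. The only item requiring a mild verification is the translation between the various notions of convergence ($\mathbb{D}$, mGH, pmGH), and this is automatic in the compact uniformly diameter-bounded $\RCD(K,n)$ regime we are working in.
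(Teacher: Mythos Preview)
Your argument is correct and follows essentially the same route as the paper's proof: a contradiction via the lower semicontinuity of the essential dimension (Theorem~\ref{thm:lower}), combined with Theorem~\ref{thm:topdim} and Theorem~\ref{thm:weak}. The only cosmetic difference is that you phrase the key step contrapositively (failure of \eqref{eq:measrigid} $\Rightarrow$ not weakly non-collapsed $\Rightarrow$ $\dim\le n-1$), whereas the paper argues directly ($\liminf\dim\ge n$ $\Rightarrow$ weakly non-collapsed $\Rightarrow$ \eqref{eq:measrigid}).
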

\begin{proof}
Let us prove the proposition by contradiction. If the statement does not hold,  then there exists a mGH convergent sequence $(X_i, d_i, \meas_i)$ of $\RCD(K, n)$ spaces to $(X, d_X, \meas_X)$ such that
\begin{equation}\label{eq:contr}
\meas_{X_i}\neq \frac{\meas_i(X_i)}{\mathcal{H}^n(X_i)}\mathcal{H}^n.
\end{equation}
However since Theorem \ref{thm:lower} states
$$
\liminf_{i \to \infty}\mathrm{dim}_{d_i, \meas_i}(X_i) \ge \mathrm{dim}_{d_X, \meas_X}(X)=n,
$$
we see that $\mathrm{dim}_{d_i, \meas_i}(X_i)=n$ for any sufficiently large $i$.
In particular Theorem \ref{thm:topdim} yields that $(X_i, d_i, \meas_i)$ is weakly non-collapsed $\RCD(K, n)$ space for such $i$. Then by applying Theorem \ref{thm:weak} to such $(X_i, d_i, \meas_i)$ we obtain that $\meas_i=\frac{\meas_i(X_i)}{\mathcal{H}^n(X_i)}\mathcal{H}^n$ which contradicts (\ref{eq:contr}).
\end{proof}
Let us remark that the Bishop inequality (\ref{eq:bishop0}) implies
\begin{equation}\label{eq:bishop}
\mathcal{H}^n(X)\le \mathcal{H}^n(\mathbb{S}^n)
\end{equation}
for all non-collapsed $\RCD(n-1, n)$ space $(X, d, \mathcal{H}^n)$. Since the equality easily implies $\mathrm{rad} (X, d)=\pi$, thus by Theorem (\ref{thm:ket}) the equality holds if and only if the space is isometric to the round sphere $\mathbb{S}^n$.
\begin{theorem}[Topological sphere theorem for $\RCD$ spaces, III]\label{prop:toprigidity}
There exists a positive constant $\epsilon_n>0$ such that if a compact non-collapsed $\RCD(n-1, n)$ space $(X, d, \mathcal{H}^n)$ satisfies $\mathcal{H}^n(X) \ge (1- \epsilon_n)\mathcal{H}^n(\mathbb{S}^n)$, then $X$ is homeomorphic to $\mathbb{S}^n$.
\end{theorem}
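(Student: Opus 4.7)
The plan is to follow Colding's strategy: use the almost-maximality of the volume together with compactness of non-collapsed $\RCD$ spaces to reduce the problem to a sequence $(X_i,d_i,\mathcal{H}^n)$ of compact non-collapsed $\RCD(n-1,n)$ spaces converging in measured Gromov-Hausdorff sense to the round sphere, and then invoke Cheeger-Colding's intrinsic Reifenberg theorem to upgrade this mGH-closeness to a homeomorphism. Concretely, I would argue by contradiction: if the statement fails there exist compact non-collapsed $\RCD(n-1,n)$ spaces $(X_i,d_i,\mathcal{H}^n)$ with $\mathcal{H}^n(X_i)\to\mathcal{H}^n(\mathbb{S}^n)$, none of them homeomorphic to $\mathbb{S}^n$. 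Theorem \ref{thm:ket} bounds the diameters by $\pi$, so Theorem \ref{thm:compnon} yields a pmGH subsequential limit $(X_\infty,d_\infty,\mathcal{H}^n)$ which is again non-collapsed $\RCD(n-1,n)$. Continuity of the total Hausdorff measure under non-collapsed mGH convergence gives $\mathcal{H}^n(X_\infty)=\mathcal{H}^n(\mathbb{S}^n)$; this is the equality case of (\ref{eq:bishop}), which, as the paragraph preceding the theorem observes, forces $\mathrm{rad}(X_\infty,d_\infty)=\pi$, and Theorem \ref{thm:ket} then identifies $(X_\infty,d_\infty)$ with $(\mathbb{S}^n,d_{\mathbb{S}^n})$.

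I am therefore reduced to proving that whenever non-collapsed $\RCD(n-1,n)$ spaces $(X_i,d_i,\mathcal{H}^n)$ mGH-converge to $(\mathbb{S}^n,d_{\mathbb{S}^n},\mathcal{H}^n)$, the $X_i$ are homeomorphic to $\mathbb{S}^n$ for all large $i$. For this I would verify the uniform Reifenberg regularity condition: for every $\delta>0$ there exist $r_0>0$ and $i_0\in\mathbb{N}$ such that for all $i\ge i_0$, every $x\in X_i$ and every $r\in(0,r_0]$ the ball $B_r(x)\subset X_i$ is $\delta r$-GH-close to the Euclidean ball $B_r(0^n)\subset\mathbb{R}^n$. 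Granting this, Cheeger-Colding's intrinsic Reifenberg theorem supplies bi-H\"older homeomorphisms from $X_i$ onto a closed topological $n$-manifold, and the mGH convergence to $\mathbb{S}^n$ together with stability of topology under Reifenberg-controlled GH limits forces this manifold to be $\mathbb{S}^n$, contradicting the standing assumption. The Reifenberg condition itself I would check by a second blow-up: were it to fail, points $x_i\in X_i$ and scales $r_i\to 0$ would yield rescaled pointed non-collapsed $\RCD(r_i^2(n-1),n)$ spaces which, by Theorem \ref{thm:compnon}, subconverge to a pointed non-collapsed $\RCD(0,n)$ space; the unrescaled convergence to the smooth sphere identifies this limit as an iterated tangent of $\mathbb{S}^n$, hence $\mathbb{R}^n$, the desired contradiction.

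The main obstacle I expect is the quantitative propagation of almost-maximality of $\mathcal{H}^n$ from the global scale down to every ball and every small scale, uniformly in $i$: without such a statement the blow-up argument in the previous paragraph cannot actually exclude a non-Euclidean tangent. In Colding's smooth setting this is done via relative volume comparison and excess estimates; in the non-collapsed $\RCD$ framework the substitute is the almost-rigidity form of the Bishop inequality \cite[Thm.1.6]{DePhilippisGigli} combined with Theorem \ref{thm:compnon}, and arranging these tools so that the local almost-maximality survives the rescaling limit is the only essentially non-routine step of the argument.
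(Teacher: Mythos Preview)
Your overall strategy---contradiction, compactness of non-collapsed $\RCD$ spaces, identification of the limit with $\mathbb{S}^n$ via the rigidity case, and then Cheeger--Colding's intrinsic Reifenberg theorem---is exactly the paper's. The difference lies in how you verify uniform Reifenberg flatness of the $X_i$.

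You propose a second blow-up contradiction: if Reifenberg flatness fails along $x_i\in X_i$ at scales $r_i\to 0$, rescale and pass to a limit, which you claim must be an iterated tangent of $\mathbb{S}^n$. As you yourself note in your final paragraph, this identification is the gap: a rescaled limit of the sequence $X_i$ is \emph{not} automatically a tangent cone of the limit space $\mathbb{S}^n$ without precisely the uniform local control you are trying to establish. You then correctly point to the almost-rigidity of the Bishop inequality as the remedy, but describe arranging it as ``the only essentially non-routine step.''

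The paper's route is more direct and shows that this step is in fact routine. The point is that the propagation of volume almost-maximality from the global scale to every ball is an immediate consequence of Bishop--Gromov monotonicity: for every $x\in X_i$ and every $r\in(0,\pi]$,
\[
\frac{\mathcal{H}^n(B_r(x))}{\mathcal{H}^n(B_r(p))}\;\ge\;\frac{\mathcal{H}^n(B_\pi(x))}{\mathcal{H}^n(\mathbb{S}^n)}\;=\;\frac{\mathcal{H}^n(X_i)}{\mathcal{H}^n(\mathbb{S}^n)}\;\ge\;1-\epsilon_i,
\]
where $p\in\mathbb{S}^n$. This single inequality, fed directly into the almost-rigidity form of the Bishop inequality \cite[Thm.~1.6]{DePhilippisGigli}, gives $d_{GH}(B_{r/2}(x),B_{r/2}(0_n))\le \epsilon r$ uniformly over all $x\in X_i$ and all $r\in(0,\pi]$ once $i$ is large. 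No second blow-up is needed; the Reifenberg hypothesis is verified in one stroke, and the intrinsic Reifenberg theorem finishes the proof. So your ``main obstacle'' dissolves into a one-line application of relative volume comparison.
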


\begin{remark}
We point out that the previous result is known for Alexandrov spaces, see for example Proposition A.9 in the work of Yamaguchi \cite{Yamaguchi}. It can also be easily proved by contradiction, by combining the rigidity of Bishop-Gromov inequality and the topological stability theorem \cite{Per} for Alexandrov spaces (see \cite{KV07}). Moreover for $n=2$, the work \cite{LS18} showed that a non-collapsed $\RCD(K,2)$ space is an Alexandrov space with curvature at least $K$. As a consequence, our result directly holds in dimension $2$. We give here the proof in full generality. 
\end{remark}

\begin{proof}
The proof is done by contradiction. Assume that the theorem does not hold, then there exist sequences $\epsilon_i \rightarrow 0$ and $(X_i, d_i, \mathcal{H}^n)$ of $\RCD(n-1, n)$ spaces such that $\mathcal{H}^n(X_i)\geq (1-\epsilon_i)\mathcal{H}^n(\s^n)$ and $X_i$ is not homeomorphic to $\mathbb{S}^n$. Then we have 
\begin{equation}\label{eq:volconv}
\lim_{i \to \infty}\mathcal{H}^n(X_i)=\mathcal{H}^n(\mathbb{S}^n).
\end{equation}
and it is not difficult to check that (\ref{eq:volconv}) implies that $\mathrm{rad}(X_i, d_i) \to \pi$. Applying Theorem \ref{thm:ket} with Theorem \ref{thm:compnon} yields 
\begin{equation}
(X_i, d_i, \mathcal{H}^n) \stackrel{mGH}{\to} (\mathbb{S}^n, d_{\mathbb{S}^n}, \mathcal{H}^n).
\end{equation}
On the other hand the inequality
\begin{equation}\label{eq:max}
\frac{\mathcal{H}^n(X_i)}{\mathcal{H}^n(\mathbb{S}^n)} \ge 1- \epsilon_i,
\end{equation} 
together with the Bishop (\ref{eq:bishop0}) and the Bishop-Gromov inequalities implies that for all $x \in X_i$ we have
\begin{equation}\label{eq:maxball}
\frac{\mathcal{H}^n(B_r(x))}{\mathcal{H}^n(B_r(p))}\ge 1- \epsilon_i, \quad \forall r \in (0, \pi],\, \forall p \in \mathbb{S}^n.
\end{equation}
Applying this observation with the almost rigidity on the Bishop inequality \cite[Thm.1.5]{DePhilippisGigli} (see also \cite[Prop.6.6]{AmbrosioHondaTewodrosePortegies}) implies that for all $\eps >0$ there exists a positive integer $i_0 \in \N$ such that for all $i \geq i_0$ and $r \in (0,\pi]$
\begin{equation}
d_{GH}(B_{r/2}(x_i),B_{r/2}(0^n))\leq \eps r,
\end{equation}
where $d_{GH}$ denotes the Gromov-Hausdorff distance. This means that for all $i \geq i_0$, the metric spaces $\{(X_i, d_i)\}_i$ are uniformly Reifenberg flat. Then applying the intrinsic Reifenberg theorem \cite[Thm.A.1.2 and Thm.A.1.3]{CheegerColding1} yields that $X_i$ is homeomorphic to $\mathbb{S}^n$ for any sufficiently large $i$, which is a contradiction.
\end{proof}

We are in position to prove our main result.

\begin{proof}[Proof of Theorem \ref{mthm}]
The proof is done by a contradiction. Assume that the statement is false: then there exists a sequence $(X_i, d_i, \meas_i)$ of $\RCD(n-1, n)$ spaces such that $\mathrm{rad}(X_i, d_i) \to \pi$, $\meas_i(X_i)=1$ and $X_i$ is not homeomorphic to $\mathbb{S}^n$. By Theorem \ref{thm:ket} with no loss generality we can assume that 
$$
(X_i, d_i, \meas_i) \stackrel{mGH}{\to} (\mathbb{S}^n, d_{\mathbb{S}^n}, \meas)
$$
for some Borel probability measure $\meas$ on $\mathbb{S}^n$.
Then Proposition \ref{prop:open} shows that $(X_i, d_i, \mathcal{H}^n) $ is also a $\RCD(n-1, n)$ space for any sufficiently large $i$. Moreover, 
$$
(X_i, d_i, \mathcal{H}^n) \stackrel{mGH}{\to} (\mathbb{S}^n, d_{\mathbb{S}^n}, \mathcal{H}^n).
$$
Therefore for any sufficiently large $i$ we obtain $\mathcal{H}^n(X)\geq (1-\epsilon_n)\mathcal{H}^n(\s^n)$ and Theorem \ref{prop:toprigidity} implies that $X_i$ is homeomorphic to $\mathbb{S}^n$, which is a contradiction. 
%$\,\,\,\,\,\,\,\square$
\end{proof}

We can now prove Corollary \ref{thm:eigenhomeo};

\begin{proof}[Proof of Corollary \ref{thm:eigenhomeo}]

The proof follows the same lines as Theorem \ref{mthm}. Assume that the statement does not hold, then there exists a sequence $(X_i, d_i, \meas_i)$ of $\RCD(n-1, n)$ spaces with $\meas_i(X_i)=1$, such that $X_i$ is not homeomorphic to $\mathbb{S}^n$ and
$$
\lim_{i \to \infty}\lambda_{n+1}(X_i, d_i, \meas_i)= n.
$$
With no loss of generality we can assume that the sequence $(X_i, d_i, \meas_i)$ mGH-converges to a $\RCD(n-1, n)$ space $(X, d, \meas)$.
Then the spectral convergence result proved in \cite[Thm.7.8]{GigliMondinoSavare} shows that 
$$
\lambda_{n+1}(X, d, \meas)=\lim_{i \to \infty}\lambda_{n+1}(X_i, d_i, \meas_i)=n.
$$
Then Theorem \ref{thm:ket} yields that $(X, d)$ is isometric to $(\mathbb{S}^n, d_{\mathbb{S}^n})$. In particular since $\mathrm{rad}(X_i, d_i) \to \mathrm{rad}(\mathbb{S}^n, d_{\mathbb{S}^n})=\pi$, Theorem \ref{mthm} yields that $X_i$ is homeomorphic to $\mathbb{S}^n$ for any sufficiently large $i$, which is a contradiction. 
\end{proof}

\begin{remark}
Thanks to the intrinsic Reifenberg theorem \cite[Thm.A.1.2 and Thm.A.1.3]{CheegerColding1}, it is easy to check that all topological sphere theorems stated above can be improved to ``bi-H\"older homeomorphism''. Moreover we can choose \textit{any} $\alpha \in (0, 1)$ as a H\"older exponent. For reader's convenience let us write down the precise statement.
For all $n \in \mathbb{N}$, $r>0$ and $\epsilon >0$, let us denote by $\mathcal{M}(n, r, \epsilon)$ the set of all isometry classes of compact metric spaces $(X, d)$ with $d_{GH}(B_t(x), B_t(0_n))\le \epsilon t$ for all $x \in X$ and all $t\le r$.
Then we have:
\begin{itemize}
\item for all $\alpha \in (0, 1)$ there exist positive constants $\epsilon_0:=\epsilon_0(n ,\alpha, r)>0$ and $\delta_0(n, \alpha, r)>0$ such that if two compact metric space $Z_i \in \mathcal{M}(n, r, \epsilon_0)$ satisfies $d_{GH}(Z_1, Z_2)<\delta_0$, then there exists a homeomorphism $\Phi:Z_1 \to Z_2$ such that $\Phi$ and $\Phi^{-1}$ are $\alpha$-H\"older continuous maps.
\end{itemize}
Although we used the almost maximality of the volume (\ref{eq:max}) in the proof of Theorem \ref{prop:toprigidity} in order to simplify our argument, by an argument similar to the proof of \cite[Thm.5.11]{CheegerColding1} with corresponding almost rigidity results in \cite{DePhilippisGigli}, we see that if a non-collapsed $\RCD(K, n)$ spaces $(X, d_X, \mathcal{H}^n)$ satisfies $X=\mathcal{R}_n$, then for all $\epsilon>0$ there exist positive constants $r>0$ and $\delta>0$ such that if a non-collapsed $\RCD(K, n)$ space $(Y, d_Y, \mathcal{H}^n)$ satisfies $d_{GH}(X, Y)\le \delta$, then $(Y, d_Y) \in \mathcal{M}(n, r, \epsilon)$. See also \cite{KM}.
%\footnote{IM: Maybe it would be worth saying a bit more about this for readers that are not familiar with Reifenberg theorem, possibly including an adapted version of the two theorems you cited in the preliminary section (I can do that). What do you think? SH: Of course I agree. I wrote a detail as above. Moreover I refered \cite{KM} because I knew from Andrea that he with Kapovitch is now writing a (survey?) paper on Reifenberg theorem. It seems to me that it might be better to refer it. So let us contact with him after this paper is almost finalized.}
\end{remark}

\section{Improvement to Einstein stratified spaces}

The previous results can be improved to the case of certain (smoothly) stratified spaces. In order to do that, we briefly recall some notions about such spaces, by mostly referring to \cite{MondPhD} and \cite{BKMR} for the precise definitions. 

A (compact) stratified space $X$ is a (compact) topological space which admits a decomposition in strata
$$X = \bigsqcup_{j=0}^n \Sigma^{j}(X)$$
such that for each $j=0,\ldots n$, $\Sigma^{j}(X)$ is a smooth manifold of dimension $j$, $\Sigma^n(X)$ is open and dense in $X$ and $\Sigma^{n-1}(X)=\emptyset$. We denote the higher dimension stratum $\Sigma^n(X)$ as $X^{\mbox{\tiny{reg}}}$, the regular set of $X$, and refer to $n$ as the dimension of $X$. We define the singular set of $X$: 
$$ X^{\mbox{\tiny{sing}}}= \bigsqcup_{j=0}^{n-2}\Sigma^{j}(X).$$
For $j < (n-1)$, $\Sigma^{j}(X)$ is called the singular stratum of dimension $j$. For each point in $\Sigma^{j}(X)$  there exists a neighbourhood $\mathcal{U}_x$ homeomorphic to the product of an Euclidean ball in $\R^j$ and a truncated cone over a compact stratified space $B_r(0_j)\times C_{[0,r)}(Z_j)$. We refer to $Z_j$ as the \emph{link} of the stratum $\Sigma^j(X)$. 

By induction on the dimension, a stratified space $X$ can be endowed with an iterated edge metric $g$, which is a Riemannian metric on $X^{\mbox{\tiny{reg}}}$ with the appropriate asymptotics close to each singular stratum: by denoting  $k_j$ an iterated edge metric on the link $Z_j$, there exist positive constants $\Lambda$ and $\gamma$ such that in a neighbourhood $\mathcal{U}_x$ of a point $x \in \Sigma^{j}(X)$ we have
\begin{equation}
\label{eq:ItEdge}
|\varphi_x^*g-(h+dr^2+r^2k_j)| \leq \Lambda r^{\gamma},
\end{equation}
where $h$ is the standard Riemannian metric on $\R^j$ and $\varphi_x$ is the homeomorphism between the product $B_r(0_j)\times C_{[0,r)}(Z_j)$ and the neighbourhood  $\mathcal{U}_x$. 

In the case of the codimension $2$ stratum, the link is a compact stratified space of dimension $1$, thus a circle. As a consequence, for each point $x \in \Sigma^{n-2}(X)$ there exists $\alpha_x \in (0, +\infty)$ such that the metric $g$ is asymptotic, in the sense of \eqref{eq:ItEdge}, to:
$$ h+ dr^2+\left(\frac{\alpha_x}{2\pi} \right)r^2d\theta^2,$$
on $\R^{n-2}\times C(\s^1)$. We refer to $\alpha_x$ as the angle of $\Sigma^{n-2}(X)$ at $x$. 

The iterated edge metric $g$ gives rise to a length structure and a distance $d_g$ on $X$, and to a Riemannian measure $\mu_g$ which in the compact case is Ahlfors-regular and finite. Note that the measure of the singular strata is zero and, as in the case of smooth manifolds, $\mu_g$ coincides with the Hausdorff measure.

Moreover, thanks to the definition of the distance and iterated edge metric, we know that each point $x \in \Sigma^{j}(X)$ admits a unique tangent cone $C(S_x)$ over the tangent sphere at $x$. It is defined by the pGH limit as $\eps$ goes to zero: 
$$(X, \eps^{-1}d_g, x) \stackrel{pGH}\longrightarrow (C(S_x), d_C, o),$$
where $o$ is the vertex of the cone. The tangent sphere is a compact stratified space of dimension $(n-1)$ given by the $(j-1)$-spherical suspension of the link
$$S_x=\left[0, \frac{\pi}{2} \right] \times \s^{j-1} \times Z_j.$$
Since the convergence is smooth on the regular sets, the tangent sphere is endowed with a double warped product metric:
\begin{equation}\label{eq:double}
h_x= d\psi^2+\cos^2(\psi) g_{\s^{j-1}}+\sin^2(\psi) k_j.
\end{equation}
Moreover, the smoothness of the convergence and the fact that singular sets have null measures imply that pGH-convergence can be replaced by \textit{pmGH-convergence}. 

Note that for $x  \in \Sigma^{0}(X)$, the tangent sphere coincides with the link of $\Sigma^{0}(X)$. If $x$ belongs to $\Sigma^{1}(X)$ then $S_x$ is a spherical suspension of the form $[0,\pi] \times Z_1$ endowed with the warped product  metric $h_x=d\psi^2+\sin^2(\psi)k_1$. 

Also observe that a point belongs to the regular set if and only if its tangent sphere is isometric to the round sphere $\s^{n-1}$. 

In view of the following, we need information about how the regularity of $X$ affects the regularity of tangent spheres. This is stated in the following.

\begin{lemma}
\label{lem:Codim2sing}
Let $(X, g)$ be an $n$-dimensional compact stratified space endowed with an iterated edge metric $g$. If $\Sigma^{n-2}(X)=\emptyset$, then for any $x \in X$ the tangent sphere $S_x$ does not carry a singular stratum of codimension $2$. 
\end{lemma}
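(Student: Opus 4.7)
The strategy is to exploit the inductive structure of stratified spaces and to track where a codimension 2 stratum of $S_x$ could possibly come from. First, observe that any $x \in X^{\mbox{\tiny{sing}}}$ must lie in some $\Sigma^j(X)$ with $j \leq n-3$, because $\Sigma^{n-1}(X) = \emptyset$ by definition of a stratified space and $\Sigma^{n-2}(X) = \emptyset$ by hypothesis; for $x$ in the regular set one has $S_x = \mathbb{S}^{n-1}$ and there is nothing to prove.

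The first step I would carry out is to show that the hypothesis propagates to every link: for each nonempty $\Sigma^j(X)$ with $j \leq n-3$, the link $Z_j$ (a stratified space of dimension $n-j-1$) has no codimension 2 stratum, i.e. $\Sigma^{n-j-3}(Z_j) = \emptyset$. This is immediate from the local model around $\Sigma^j(X)$: a point of $\Sigma^j(X)$ admits a neighbourhood homeomorphic to $B_r(0_j) \times C_{[0,r)}(Z_j)$, and a singular stratum $\Sigma^k(Z_j)$ of the link induces, via this product/cone structure, a singular stratum of $X$ of dimension $j + 1 + k$. Choosing $k = n - j - 3$ would then produce a stratum of $X$ of dimension $n - 2$, contradicting the assumption.

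Next, I would enumerate the singular strata of $S_x$ using the double warped product description \eqref{eq:double}, with the standard conventions that $S_x = Z_0$ when $j = 0$ and $S_x$ is the spherical suspension of $Z_1$ when $j = 1$. Each singular stratum of $S_x$ must fall into one of two types. Type (i): strata propagated from the singularities of $Z_j$; at a point $(\psi, [x], y)$ with $\psi \in (0, \pi/2]$ and $y \in \Sigma^k(Z_j)$, the stratified neighbourhood is of the form $\mathbb{R}^{n-1-c} \times C(L_y)$, where $c = (n-j-1) - k$ is the codimension of $y$ inside $Z_j$ and $L_y$ is the link of $y$ in $Z_j$; the corresponding stratum of $S_x$ has the same codimension $c$. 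A codimension 2 stratum of this type would require a codimension 2 stratum of $Z_j$, which has just been excluded. Type (ii): the copy of $\mathbb{S}^{j-1}$ sitting at $\psi = 0$ (together with the antipodal pole when $j = 1$), which is singular in $S_x$ precisely when $Z_j$ is not isometric to the round sphere $\mathbb{S}^{n-j-1}$; its dimension is $j-1$, so its codimension in $S_x$ equals $n - j \geq 3$ under our standing assumption $j \leq n - 3$. Combining (i) and (ii), $\Sigma^{n-3}(S_x) = \emptyset$, which is the claim.

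The main thing to be careful about is to verify that the stratification of the spherical join $\mathbb{S}^{j-1} \ast Z_j$ encoded by \eqref{eq:double} is really exhausted by the two families above, and in particular that the collapsing loci $\psi = 0$ and $\psi = \pi/2$ do not secretly produce extra codimension 2 strata beyond those detected by the local model. Once this bookkeeping is set up, the statement reduces to a clean dimension count that relies only on the inductive structure of iterated edge metrics and the single hypothesis $\Sigma^{n-2}(X) = \emptyset$.
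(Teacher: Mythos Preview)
Your proposal is correct and follows essentially the same approach as the paper: first propagate the hypothesis $\Sigma^{n-2}(X)=\emptyset$ to the links $Z_j$, then inspect the stratification of the spherical join $S_x=\mathbb{S}^{j-1}\ast Z_j$ and check that every singular stratum has codimension at least $3$. The only organizational difference is that you absorb the locus $\psi=\pi/2$ into Type~(i) (since the collapsing cone $C(\mathbb{S}^{j-1})\cong\mathbb{R}^j$ is smooth there, the codimension at such points agrees with that of $y$ in $Z_j$), whereas the paper lists the two endpoints $\psi=0$ and $\psi=\pi/2$ as separate cases; the dimension counts are the same.
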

\begin{proof}
Assume $x \in  \Sigma^j(X)$ for $j \in \{0,\ldots n-3\}$ and consider $Z_j$ the link of $\Sigma^j(X)$. Denote by $d_j=n-j-1$ its dimension and by $\varphi_x$ the homeomorphism between a neighbourhood of $x$ and the product $B_r(0_j)\times C_{[0,r)}(Z_j)$. We first observe that $Z_j$ does not carry any singular stratum of codimension $2$. Assume by contradiction that there exists $z \in \Sigma^{d_j-2}(Z_j)$: then $z$ has a neighbourhood homeomorphic to $B_s(0_{d_j-2})\times C_{[0,s)}(\s^1)$. Denote by $\bar{p}$ the point of coordinates $(v,s,z)$ in $B_r(0_{j})\times C_{[0,r)}(Z_j)$ and $\bar x =\varphi_x(\bar p) \in X$. Then $\bar x$ has a neighbourhood homeomorphic to $B_{\rho}(0_{n-2})\times C(\s^1)$. As a consequence, $\bar x$ belongs to $\Sigma^{n-2}(X)$, which contradicts the assumption $\Sigma^{n-2}(X)=\emptyset$.

We next show that $Z_j$ not having any singular stratum of codimension $2$, the same holds for the tangent sphere $S_x$. If $j=0$, $S_x$ coincides with $Z_j$ and thus have the same singular set. If $j=1$, $S_x$ is the warped product $([0,\pi] \times Z_1, d\psi^2+\sin^2(\psi)k_1)$. Its singular set $S_x^{\mbox{\tiny{sing}}}$ is composed of the subsets $(0,\pi)\times Z_1^{\mbox{\tiny{sing}}}$ and $\{0,\pi\}\times Z_1$. As for $(0,\pi)\times Z_1^{\mbox{\tiny{sing}}}$, it only generates singularities of the same codimension as the ones of $Z_1^{\mbox{\tiny{sing}}}$, that is at least 3. The singularities at $\{0\} \times Z_1$ and $\{\pi\}\times Z_1$ carry a neighbourhood homeomorphic to $C(Z_1)$ and as a consequence have codimension 0 in $S_x$. Therefore $\Sigma^{n-2}(S_x)=\emptyset$. Similarly, for $j\in \{2, \ldots , n-3\}$, the  singular set of $S_x$ is given by : 
\begin{itemize}
\item[•] the product $(0, \pi/2)\times Z_j^{\mbox{\tiny{sing}}}$ which has codimension at least 3 because we already know $\Sigma^{d_j-2}(Z_j)=\emptyset$; 
\item[•] $\{0\} \times \s^{j-1} \times Z^j$: any point in this product has a neighbourhood homeomorphic to $B(0_j)\times Z_j$, then for the same reason singularities have codimension at least 3; 
\item[•] $\{\pi/2\} \times \s^{j-1} \times Z^j$: any point belonging to this set has a neighbourhood homeomorphic to $\s^{j-1}\times C(Z_j)$. This gives singularities of codimension $n-j$ in $S_x$ and since $j \leq n-3$, the codimension is at least 3. 
\end{itemize}
We have shown that for any $x \in X$ we have $\Sigma^{n-2}(S_x)=\emptyset$, as we wished. \end{proof}

Thanks to \cite{BKMR} we know the following: 

\begin{theorem*}
A compact stratified space $(X, d_g, \mu_g)$ of dimension $n$ endowed with an iterated edge metric $g$ is a $\RCD(K,N)$ space for $K\in \R$, $N \geq 1$, if and only if $n \leq N$, $\mathrm{Ric}_g \geq K$ on $X^{\mbox{\tiny{reg}}}$  and the angles along the singular stratum of codimension $2$ are smaller than or equal to $2\pi$. 
\end{theorem*}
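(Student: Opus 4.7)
The plan is to prove the two implications separately: blow-up and restriction for ($\Rightarrow$), and smoothing plus stability under pmGH convergence for ($\Leftarrow$).

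For the forward direction, suppose $(X, d_g, \mu_g)$ is $\RCD(K,N)$. The dimension bound $n \leq N$ follows because every point of the open dense smooth part $X^{\mbox{\tiny{reg}}}$ has Euclidean tangent cone and hence lies in $\mathcal{R}_n$, so the essential dimension of $X$ equals $n$ and must satisfy $n \leq N$. The inequality $\mathrm{Ric}_g \geq K$ on $X^{\mbox{\tiny{reg}}}$ follows by localizing the Bakry-\'Emery inequality \eqref{eq:boch} to the smooth open submanifold $X^{\mbox{\tiny{reg}}}$, where it is equivalent to the classical pointwise Ricci lower bound. For the angle condition, given $x \in \Sigma^{n-2}(X)$ with angle $\alpha_x$, the tangent cone at $x$ is isometric to $\R^{n-2} \times C(\s^1_{\alpha_x})$, where $\s^1_{\alpha_x}$ is the circle of total length $\alpha_x$; since scaled pmGH limits of $\RCD(K,N)$ spaces are $\RCD(0,N)$ and products respect the splitting, the $2$-dimensional factor $C(\s^1_{\alpha_x})$ must itself be $\RCD(0,2)$, and a direct computation on the cone shows this holds precisely when $\alpha_x \leq 2\pi$.

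For the reverse direction, the strategy is to construct a family $\{g_\eps\}_{\eps > 0}$ of smooth Riemannian metrics on resolutions $M_\eps$ of $X$ (obtained by cutting out an $\eps$-tube of each stratum and gluing in a smooth model) satisfying $\mathrm{Ric}_{g_\eps} \geq K - \eta(\eps)$ with $\eta(\eps) \to 0$, and with $(M_\eps, d_{g_\eps}, \mu_{g_\eps}) \to (X, d_g, \mu_g)$ in pmGH sense. Near a codimension $2$ stratum with angle $\alpha \leq 2\pi$, the model cone factor $dr^2 + (\alpha/2\pi)^2 r^2 d\theta^2$ can be smoothed by a warped product $dr^2 + f_\eps(r)^2 d\theta^2$ with $f_\eps$ concave and $f_\eps(0) > 0$; the assumption $\alpha \leq 2\pi$ is exactly what allows this smoothing without driving the Ricci curvature to $-\infty$. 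For higher codimension strata one proceeds inductively on $n$: the link $Z_j$ is a compact stratified space of smaller dimension satisfying the hypotheses, so by induction the spherical suspension model \eqref{eq:double} carries the required $\RCD$ structure, and a warped-product smoothing of the model $dr^2 + r^2 k_j$ (perturbing the warping function near $r = 0$) extends the induction. Since the $\RCD(K - \eta(\eps), N)$ property is closed under pmGH convergence (Theorem \ref{thm:comprcd}), letting $\eps \to 0$ yields $(X, d_g, \mu_g) \in \RCD(K,N)$.

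The main obstacle is the inductive construction of the smoothing near higher-codimension strata with uniform control on $\eta(\eps)$: the iterated edge structure means the link of a stratum may itself carry complicated nested singularities, so the smoothings must be assembled consistently across all lower-dimensional pieces, and the errors must be made to vanish uniformly. An alternative (perhaps cleaner) route is to verify the Lott-Sturm-Villani curvature-dimension condition $\CD^*(K,N)$ directly by establishing convexity of the R\'enyi entropy along $L^2$-optimal transport plans, using that such plans avoid the singular set in a quantitative way; this quantitative avoidance relies on the singular set having codimension $\geq 2$ together with the $\alpha \leq 2\pi$ condition at codimension $2$, which precisely rules out the focal concentration that would otherwise destroy the convexity.
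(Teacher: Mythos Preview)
This theorem is not proved in the paper: it is quoted verbatim from \cite{BKMR} and used as a black box. There is therefore no ``paper's own proof'' to compare against, and you should simply cite the result rather than attempt to reprove it here.

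That said, a few remarks on your sketch. The forward direction is essentially right in spirit: the dimension bound comes from the essential dimension, the Ricci bound on $X^{\mbox{\tiny{reg}}}$ from localizing Bochner, and the angle constraint from the tangent-cone analysis at codimension-$2$ points. The reverse direction, however, is where the real content lies, and your proposed smoothing route is not the one taken in \cite{BKMR} and is genuinely problematic. Producing a family of \emph{smooth} closed manifolds $(M_\eps,g_\eps)$ with $\mathrm{Ric}_{g_\eps}\ge K-\eta(\eps)$ that pmGH-converge to an arbitrary iterated-edge stratified space is not known in general; already in codimension $2$ the ``concave warping'' you describe alters the topology (you are replacing a cone point by a disk), and for higher-codimension strata the inductive gluing you outline has no mechanism to keep the Ricci lower bound from degenerating near the junctions between strata of different depths. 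Your alternative suggestion---verifying $\CD^*(K,N)$ directly via entropy convexity and showing optimal plans avoid the singular set---is closer in flavour to what is actually done, but the claim that ``such plans avoid the singular set in a quantitative way'' is exactly the hard analytic input and cannot be asserted without proof. The argument in \cite{BKMR} instead proceeds through the Eulerian (Bakry--\'Emery) characterization, establishing infinitesimal Hilbertianity, the Sobolev-to-Lipschitz property, and the weak Bochner inequality directly on $(X,d_g,\mu_g)$ by exploiting the iterated-edge structure and the tangent-cone stratification; this bypasses both the smoothing and the transport-regularity issues you raise.
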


Since $\mu_g$ is the Hausdorff measure, a $\RCD(K,n)$ compact stratified space of dimension $n$ is also a \emph{non-collapsed} $\RCD$ space. 

An easy consequence of the definition of the iterated edge metric is the following:

\begin{lemma}
\label{lem:RicTg}
Let $(X, d_g, \mu_g)$ be a $\RCD(K,n)$ compact stratified space of dimension $n$ endowed with an iterated edge metric $g$. Then for every $x \in X$, the tangent sphere $(S_x, d_{h_x}, \mu_{h_x})$ at $x$ is a non-collapsed $\RCD(n-2,n-1)$ space. Moreover, if for $K \geq 0$ $||\mathrm{Ric}_g||\leq K$ holds on the regular set, we also have $\mathrm{Ric}_{h_x}=(n-2)$ on $S_x^{\mbox{\tiny{reg}}}$. 
\end{lemma}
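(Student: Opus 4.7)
The plan is to derive both assertions by analyzing the tangent cone $C(S_x)$ at $x$ as a pointed measured Gromov--Hausdorff limit and then invoking Ketterer's characterization of metric cones in the $\RCD$ setting. First, I would observe that since $\mu_g = \mathcal{H}^n$, the rescaled spaces $(X, \eps^{-1} d_g, \eps^{-n} \mathcal{H}^n, x)$ are non-collapsed $\RCD(\eps^2 K, n)$ spaces. By the compactness result for non-collapsed $\RCD$ spaces (Theorem \ref{thm:compnon}) every pmGH subsequential limit as $\eps \to 0^+$ is a non-collapsed $\RCD(0, n)$ space. By the structure of iterated edge metrics recalled in this section, this limit is exactly $(C(S_x), d_C, \mathcal{H}^n, o)$: the smooth convergence of the metric on the regular part guaranteed by \eqref{eq:ItEdge} together with the fact that the singular set has zero $\mathcal{H}^n$-measure force the measures to converge to the Hausdorff measure of the limit cone.

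Next, I would apply Ketterer's cone characterization: a non-collapsed metric cone $(C(Y), d_C, \mathcal{H}^n)$ over an $(n-1)$-dimensional metric measure space $(Y, d_Y, \mathcal{H}^{n-1})$ with $\diam(Y) \leq \pi$ is $\RCD(0, n)$ if and only if $(Y, d_Y, \mathcal{H}^{n-1})$ is a non-collapsed $\RCD(n-2, n-1)$ space. Since the stratified structure on $S_x$ with its double-warped metric $h_x$ ensures that the Riemannian measure $\mu_{h_x}$ coincides with $\mathcal{H}^{n-1}$, this directly yields the first assertion of the lemma: $(S_x, d_{h_x}, \mu_{h_x})$ is a non-collapsed $\RCD(n-2, n-1)$ space.

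For the Ricci identity under the additional hypothesis $\|\mathrm{Ric}_g\| \le K$, I would exploit the smoothness of the pmGH convergence on the regular sets. Viewed as a $(0,2)$-tensor the Ricci tensor is scale-invariant, so $\|\mathrm{Ric}_{\eps^{-2} g}\|_{\eps^{-2} g} = \eps^2 \|\mathrm{Ric}_g\|_g \leq \eps^2 K$, and passing to the limit forces $\mathrm{Ric} \equiv 0$ on the regular part of $C(S_x)$. A direct application of the warped product formula for the cone metric $dr^2 + r^2 h_x$ then translates this vanishing into $\mathrm{Ric}_{h_x} = (n-2) h_x$ on $S_x^{\mbox{\tiny{reg}}}$, as required.

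The point I expect to require most care is the identification of the limit in the first step: one has to make sure that the rescaled non-collapsed stratified spaces really produce the cone $C(S_x)$ equipped with its full Hausdorff measure, and not merely some a priori weaker pmGH limit. This is however a direct consequence of the iterated edge asymptotics \eqref{eq:ItEdge} combined with the non-collapsed compactness; the subsequent appeals to Ketterer's cone theorem and to the standard cone warped product formula are then routine.
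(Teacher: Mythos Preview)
Your argument is correct but follows a genuinely different route from the paper's. The paper stays entirely within the stratified framework: it first observes, via the cone warped product formula (citing \cite{Mondello}), that the lower Ricci bound on $X^{\mbox{\tiny reg}}$ forces $\mathrm{Ric}_{h_x}\ge n-2$ on $S_x^{\mbox{\tiny reg}}$; it then checks by hand, with an argument parallel to Lemma~\ref{lem:Codim2sing}, that the codimension-$2$ angle condition $\alpha\le 2\pi$ passes from $X$ to the links $Z_j$ and hence to every tangent sphere $S_x$; finally it invokes the BKMR characterization to conclude that $S_x$ is $\RCD(n-2,n-1)$. You instead bypass the angle analysis altogether: you use stability of the non-collapsed $\RCD$ condition under pmGH limits to see that $C(S_x)$ is non-collapsed $\RCD(0,n)$, and then Ketterer's cone theorem yields the $\RCD(n-2,n-1)$ condition on the cross-section directly. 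Both arguments handle the equality case $\mathrm{Ric}_{h_x}=n-2$ under the two-sided bound in the same way. Your approach is slicker and uses only black-box $\RCD$ machinery; the paper's approach, on the other hand, extracts the explicit structural fact that the angles along $\Sigma^{n-3}(S_x)$ lie in $(0,2\pi]$, which is in the spirit of the induction used later in Theorem~\ref{thm:strati}. Two small points to make precise in your write-up: the non-collapsing hypothesis of Theorem~\ref{thm:compnon} for the rescaled spaces follows from the Ahlfors regularity of $\mu_g$ stated in the preliminaries, and the diameter bound $\diam(S_x)\le\pi$ needed for Ketterer is automatic from the fact that $C(S_x)$ is a genuine metric cone.
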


\begin{proof}
Because of the definition of the tangent cone, we know that if $\mathrm{Ric}_g \geq K$ on $X^{\mbox{\tiny{reg}}}$, then $(C(S_x), ds^2+s^2h_x)$ has non-negative Ricci tensor on its regular part. As a consequence, for each tangent sphere $\mathrm{Ric}_{h_x}\geq (n-2)$ on $S_x^{\mbox{\tiny{reg}}}$ (see Lemma 2.1 in \cite{Mondello}). Analogously, if $||\mathrm{Ric}_g|| \leq K$ on $X^{\mbox{\tiny{reg}}}$ the tangent cone is Ricci flat and $\mathrm{Ric}_{h_x}=(n-2)$ on $S_x^{\mbox{\tiny{reg}}}$. 

For every $x$, $S_x$ cannot carry a stratum of codimension $2$ with angles larger than $2\pi$. The argument is the same as in Lemma \ref{lem:Codim2sing}. If for every $x \in \Sigma^{n-2}(X)$ the angle $\alpha_x$ is smaller than or equal to $2\pi$, then the same holds for every link $Z_j$ of dimension $d_j$: the angles along $\Sigma^{d_j-2}(Z_j)$ belongs to $(0,2\pi]$. Now, if $x \in \Sigma^j(X)$, singularities of codimension $2$ of $S_x$ are determined by the singularities of codimension $2$ in $Z_j$: as a consequence, if $p \in \Sigma^{n-3}(S_x)$, then $\alpha_p \in (0, 2\pi]$.

Then for any $x \in X$ the tangent sphere at $x$ satisfies the assumptions of the previous theorem and  $(S_x,d_{h_x}, \mu_{h_x})$ is a $\RCD(n-1,n-2)$ space. 
\end{proof}

We are now in position to prove the following: 

\begin{theorem}[Sphere theorem for Einstein stratified spaces]\label{thm:strati}
For all $n \in \mathbb{N}_{\ge 2}$ there exists a positive constant $\epsilon_n>0$ such that the following holds. Let $(X, g)$ be  compact $n$-dimensional stratified space endowed with an iterated edge metric $g$ satisfying  $\mathrm{Ric}_g \equiv n-1$ on the regular set. Assume that $\Sigma^{n-2}(X) = \emptyset$. If $\mu_g(X) \ge (1-\epsilon_n)\mathcal{H}^n(\mathbb{S}^n)$, then $(X, d_g)$ is isometric to $(\mathbb{S}^n, d_{\mathbb{S}^n})$.
\end{theorem}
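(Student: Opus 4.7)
The plan is to prove this by induction on $n$, the key idea being to rule out every singular point of $X$ by matching the almost-maximality of the global volume to an almost-maximality of the volume of each tangent sphere, and then to invoke the smooth Einstein sphere theorem (the result of Yokota mentioned in the acknowledgement). Since $\Sigma^{n-2}(X)=\emptyset$, the angle condition in the theorem of \cite{BKMR} is trivially satisfied, so $(X, d_g, \mathcal{H}^n)$ is a non-collapsed $\RCD(n-1,n)$ space, and in particular Bishop-Gromov monotonicity is at our disposal.

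The base case $n=2$ is quick: under $\Sigma^0(X)=\emptyset$ the space $X$ is a smooth closed Riemannian surface of Gauss curvature $1$, and Gauss-Bonnet yields $\mu_g(X)=2\pi\,\chi(X)$. For any $\epsilon_2<1/2$ the hypothesis then forces $\chi(X)=2$, and the equality case of Gauss-Bonnet gives $(X, d_g) \cong (\mathbb{S}^2, d_{\mathbb{S}^2})$.

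For the inductive step, assume the statement in dimension $n-1$ with constant $\epsilon_{n-1}$, and choose $\epsilon_n \le \epsilon_{n-1}$, to be shrunk further later. Suppose by contradiction that there is a singular point $x \in \Sigma^j(X)$ for some $j \in \{0,\dots,n-3\}$. By Lemma \ref{lem:RicTg} the tangent sphere $(S_x, d_{h_x}, \mu_{h_x})$ is a non-collapsed $\RCD(n-2, n-1)$ space, Einstein with $\mathrm{Ric}_{h_x}=n-2$ on its regular set, and by Lemma \ref{lem:Codim2sing} it satisfies $\Sigma^{n-3}(S_x)=\emptyset$. The quantitative link is Bishop-Gromov monotonicity of $r \mapsto \mathcal{H}^n(B_r(x))/\mathrm{Vol}_{n-1,n}(r)$, which combined with the volume hypothesis yields in the limit $r \to 0^+$
\[
\lim_{r \to 0^+}\frac{\mathcal{H}^n(B_r(x))}{\mathrm{Vol}_{n-1,n}(r)} \;\ge\; \frac{\mathcal{H}^n(X)}{\mathcal{H}^n(\mathbb{S}^n)} \;\ge\; 1 - \epsilon_n,
\]
and since $X$ is infinitesimally modeled at $x$ on the metric cone $C(S_x)$, a direct computation on the cone identifies the left-hand side with $\mathcal{H}^{n-1}(S_x)/\mathcal{H}^{n-1}(\mathbb{S}^{n-1})$. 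Hence $\mu_{h_x}(S_x) \ge (1-\epsilon_{n-1})\mathcal{H}^{n-1}(\mathbb{S}^{n-1})$, and the inductive hypothesis forces $(S_x, d_{h_x}) \cong (\mathbb{S}^{n-1}, d_{\mathbb{S}^{n-1}})$. But then the tangent cone at $x$ is the Euclidean $\R^n$, so $x \in \mathcal{R}_n = X^{\mbox{\tiny{reg}}}$, contradicting $x \in \Sigma^j(X)$ with $j \le n-3$.

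Once all singular points are ruled out, $(X, g)$ is a smooth closed Einstein $n$-manifold with $\mathrm{Ric}_g=(n-1)g$ and $\mu_g(X) \ge (1-\epsilon_n)\mathcal{H}^n(\mathbb{S}^n)$, and after shrinking $\epsilon_n$ once more the smooth Einstein sphere theorem gives $(X, d_g) \cong (\mathbb{S}^n, d_{\mathbb{S}^n})$. The most delicate step will be the identification of the Bishop-Gromov density at a singular $x$ with the normalized volume of $S_x$: this requires controlling the iterated-warped-product structure of $h_x$ along the pmGH convergence to $C(S_x)$ (the one discussed around \eqref{eq:double}), and keeping careful track of the constants $\epsilon_n$ across the induction so that a single $\epsilon_n$ simultaneously controls the volume of every tangent sphere and the final smooth rigidity step.
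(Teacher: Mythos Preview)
Your proposal is correct and follows essentially the same route as the paper: induction on $n$, Bishop--Gromov transfers the global volume lower bound to each tangent sphere $S_x$, Lemmas~\ref{lem:Codim2sing} and~\ref{lem:RicTg} place $S_x$ in the inductive class, the inductive hypothesis forces $S_x\cong\mathbb{S}^{n-1}$ so every point is regular, and one concludes via the smooth Einstein case. The only substantive difference is that you invoke the smooth Einstein sphere theorem as a black box (attributing it to Yokota), whereas the paper actually supplies a short proof of it---via Cheeger--Colding's smooth convergence under a two-sided Ricci bound together with Tachibana's theorem on Einstein manifolds with positive curvature operator---and then, like you, takes $\epsilon_n$ to be the minimum of the smooth constants in dimensions $\le n$.
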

\begin{proof}
Let us first check the theorem in the case of $(X, g)$ not having a singular set, that is $(X, g)$ is a smooth manifold. The proof is done by a contradiction. If the assertion does not hold, then there exists a sequence of $n$-dimensional closed Riemannian manifolds $(M^n_i, g_i)$ such that $\mathrm{Ric}_{M^n_i}^{g_i} \equiv n-1$, that $\mu_{g_i}(M^n_i)\to \mathcal{H}^n(\mathbb{S}^n)$ and that $(M^n_i, g_i)$ is not isometric to $(\mathbb{S}^n, g_{\mathbb{S}^n})$. Then applying the smooth convergence result \cite[Thm.7.3]{CheegerColding1} yields that $(M^n_i, g_i)$ converge smoothly to $(\mathbb{S}^n, g_{\mathbb{S}^n})$. In particular $(M^n_i, g_i)$ is simply connected and has positive curvature operator for suficiently large $i$. Then by a theorem of \cite{Tachibana} $(M^n_i, g_i)$ has constant sectional curvature. Thus $(M^n_i, g_i)$ is isometric to $(\mathbb{S}^n, g_{\mathbb{S}^n})$, which is a contradiction.

For all $n \in \mathbb{N}_{\geq 2}$ take a positive constant $\epsilon_n>0$ such that the smooth version of Theorem \ref{thm:strati} holds. 
Next let us fix a compact $n$-dimensional stratified space $(X, g)$ satisfying $\mathrm{Ric}_g \equiv n-1$ on the regular set and $\Sigma^{n-2}(X) = \emptyset$. Our goal is to prove that if $\mu_g (X) \geq (1-\hat{\epsilon}_n)\mathcal{H}^n(\mathbb{S}^n)$, then $(X, d_g)$ is isometric to $(\mathbb{S}^n, d_{\mathbb{S}^n})$, where $\hat{\epsilon}_n:=\min \{\epsilon_{i}, 1 \le i \le n\}$ and $\epsilon_1:=0$.

The proof is done by induction.
For $n=2$, our assumption $\Sigma^{n-2}(X)=\emptyset$ yields that $(X, g)$ has no singular set, thus $(X, g)$ is a smooth Riemannian manifold. By definition of $\epsilon_2$ we get the desired statement.

As for $n \geq 3$, let $x \in X$ and consider the tangent cone $(C(S_x), ds^2+s^2h_x, o)$ at $x$. By an argument similar to (\ref{eq:maxball}), we see that for all points $x \in X$ and $r\in (0,\pi]$ we have $\mu_g(B_r(x))=\mathcal{H}^n(B_r(x))\geq (1-\hat{\epsilon}_n)\omega_n r^n$. By rescaling the metric by a factor $r^{-2}$ and by letting $r$ go to zero, $(X,r^{-2}g, \mathcal{H}^n, x)$ pmGH-converges to the tangent cone $(C(S_x), ds^2+s^2h_x, \mathcal{H}^n, o)$. As a consequence, we have $\mathcal{H}^n(B_1(o)) \geq (1-\hat{\epsilon}_n)\omega_n$. Since $\mu_{h_x}(S_x)=n\mathcal{H}^n(B_1(o))$, we obtain
%\footnote{SH: This comes from the co-area formula for the distance function from the pole of the metric cone, that is, $\mathcal{H}^n(B_r(o))=\int_0^rt^{n-1}\mu_{h_g}(S_x)dt=r^n\mu_{h_x}(S_x)/n$.}
$$\mu_{h_x}(S_x) \geq (1-\hat{\epsilon}_{n})\mathcal{H}^{n-1}(\s^{n-1})\geq (1-\hat{\epsilon}_{n-1})\mathcal{H}^{n-1}(\s^{n-1}).$$ 
On the other hand, by Lemma \ref{lem:RicTg}, we also know that $(S_x,h_x)$ is a compact stratified space of dimension $(n-1)$ with $\mathrm{Ric}_{h_x}\equiv (n-2)$ on its regular set. Moreover, since $\Sigma^{n-2}(X)=\emptyset$, Lemma \ref{lem:Codim2sing} ensures that $S_x$ does not have any singular stratum of codimension $2$. Then by the assumption on the induction, $(S_x, d_{h_x})$ is isometric to the round sphere $(\mathbb{S}^{n-1}, d_{\mathbb{S}^{n-1}})$. As a consequence we have proven that for any $x \in X$, $x$ is a regular point. This proves that $X^{\mbox{\tiny{sing}}}=\emptyset$, thus $(X, g)$ is a smooth Riemannian manifold. By definition of $\epsilon_n$, $(X, d_g)$ is isometric to $(\mathbb{S}^n, d_{\mathbb{S}^n})$. 
%and $(X^n, g)$ is a smooth manifold with $\mathrm{Ric}_g \equiv (n-1)$ and $\mathcal{H}^n(X) \geq %\mathcal{H}^n(\s^n)-\epsilon_n $, thus it  must be isometric to the round sphere $\s^{n}$. 
\end{proof}

\begin{remark} In the theorem above the assumption $\Sigma^{n-2}(X)=\emptyset$ is essential. Consider for all $a \in (0,1)$, the $(n-2)$-spherical suspension of $\s^1(a):= (\s^1,a^2d\theta^2)$ defined by: 
$$ X =\left[0, \frac{\pi}{2} \right] \times \s^{n-2} \times \s^1(a),$$
endowed with the double warped product metric $g$ as in (\ref{eq:double}). $(X, g)$ is a compact $n$-dimensional stratified space with the iterated edge metric $g$ and $\mathrm{Ric}_g \equiv (n-1)$ on $X^{\mbox{\tiny{reg}}}$ and a non-empty, codimension $2$ singular stratum given by $\{\pi/2\} \times \s^{n-2} \times \s^1(a)$. If the angle $\alpha = 2\pi a$ along $\Sigma^{n-2}(X)$ is close to $2\pi$, then the volume of $X$ is close to the one of $\s^n$, but $(X, g)$ cannot be isometric to $(\s^n, d_{\s^n})$ because of its singular stratum of codimension $2$. 
\end{remark}

We now consider stratified spaces that are not necessarily compact. Note that even in this case tangent spheres are compact stratified space defined as above, and Lemmas \ref{lem:Codim2sing} and \ref{lem:RicTg} still hold. In presence of a codimension $2$ stratum and a two-side Ricci bound, the previous theorem allows us to obtain the following:

\begin{cor}\label{cor:reg}
For all $n \in \mathbb{N}_{\ge 2}$ there exists a positive constant $\epsilon_n>0$ such that the following holds. Let $(X, g)$ be an $n$-dimensional stratified space endowed with an iterated edge metric $g$ such that $\mathrm{Ric}_g$ is two-side bounded on the regular set of $X$. If a point $x \in X$ satisfies
$$
\lim_{r \to 0^+}\frac{\mu_g (B_r(x))}{\omega_nr^n}\ge 1-\epsilon_n,
$$
then either $x$ belongs to $X^{\mbox{\tiny{reg}}}$, or $x \in \Sigma^{n-2}(X)$ and $\alpha_x \geq 2\pi(1 -\epsilon_n)$. 
\end{cor}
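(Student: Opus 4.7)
The plan is to blow up at $x$, translate the density hypothesis into a near-maximal volume bound for the tangent sphere $(S_x, h_x)$, and then apply Theorem \ref{thm:strati} inductively. Using the pmGH convergence $(X, r^{-2}g, x) \to (C(S_x), ds^2 + s^2 h_x, o)$ and the fact that this convergence is smooth on regular parts (so Riemannian/Hausdorff volumes pass to the limit), the hypothesis
$$\lim_{r \to 0^+} \frac{\mu_g(B_r(x))}{\omega_n r^n} \geq 1 - \epsilon_n$$
becomes $\mu_{h_x}(S_x) \geq (1 - \epsilon_n)\mathcal{H}^{n-1}(\mathbb{S}^{n-1})$. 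Furthermore, by Lemma \ref{lem:RicTg} the two-sided Ricci bound on the regular set of $X$ ensures that $(S_x, h_x)$ is a compact Einstein stratified space of dimension $n - 1$ with $\mathrm{Ric}_{h_x} \equiv n - 2$ on its regular set.

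I would then proceed by induction on $n$. The base case $n = 2$ is direct: since $\Sigma^{n-2}(X) = \Sigma^0(X)$ in that dimension, any non-regular $x$ is an isolated conical singularity whose link is $\mathbb{S}^1(\alpha_x/(2\pi))$ of length $\alpha_x$, and the hypothesis gives $\alpha_x \geq 2\pi(1 - \epsilon_2)$. For $n \geq 3$, I split into cases by the stratum $\Sigma^j(X)$ containing $x$. If $x \in X^{\mathrm{reg}}$ or $x \in \Sigma^{n-2}(X)$, the conclusion is immediate: in the latter case $S_x$ is the spherical join $\mathbb{S}^{n-3} * \mathbb{S}^1(\alpha_x/(2\pi))$ whose volume equals $(\alpha_x/(2\pi))\mathcal{H}^{n-1}(\mathbb{S}^{n-1})$, yielding $\alpha_x \geq 2\pi(1 - \epsilon_n)$. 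In the remaining case $x \in \Sigma^j(X)$ with $j \leq n - 3$, the link $Z_j$ is a compact Einstein stratified space of dimension $d_j = n - j - 1 \geq 2$ with $\mathrm{Ric}_{k_j} \equiv d_j - 1$ (by the argument of Lemma \ref{lem:RicTg} applied to the iterated tangent structure), and the spherical-join volume identity propagates near-maximality as $\mu_{k_j}(Z_j) \geq (1 - \epsilon_n)\mathcal{H}^{d_j}(\mathbb{S}^{d_j})$. If $\Sigma^{d_j - 2}(Z_j) = \emptyset$, Theorem \ref{thm:strati} applied to $Z_j$ in dimension $d_j < n$ (with $\epsilon_n$ taken smaller than the constant of that theorem) forces $Z_j$ isometric to $\mathbb{S}^{d_j}$, so the tangent cone at $x$ is $\R^n$, contradicting $x \notin X^{\mathrm{reg}}$.

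The main obstacle is the subcase where the iterated link $Z_j$ itself carries a codim-2 stratum. Here I would iterate the argument, applying the inductive hypothesis of the corollary in dimension $d_j < n$ at suitable points of $Z_j$ and tracking how codim-2 singularities at each level of iteration translate into the singular structure of $X$ at $x$. The technical content is in keeping the cumulative volume losses across the iterated link decomposition within $\epsilon_n$ and ensuring that, under near-maximality, the only singular structure compatible with the stratification at $x$ is that of a codim-2 stratum with angle close to $2\pi$; a quantitative compactness/rigidity argument for Einstein stratified spaces with near-maximal volume, combined with the inductive structure, is what I expect to close this case.
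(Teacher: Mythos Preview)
Your overall strategy matches the paper's: blow up at $x$, use Lemma \ref{lem:RicTg} to see that $(S_x,h_x)$ is a compact $(n-1)$-dimensional stratified space with $\mathrm{Ric}_{h_x}\equiv n-2$ on its regular set, translate the density hypothesis into $\mu_{h_x}(S_x)\ge(1-\epsilon_n)\mathcal{H}^{n-1}(\mathbb{S}^{n-1})$, and treat the case $x\in\Sigma^{n-2}(X)$ via the identity $\lim_{r\to 0}\mu_g(B_r(x))/(\omega_n r^n)=\alpha_x/(2\pi)$ exactly as you do.

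The divergence is in the case $x\in\Sigma^j(X)$ with $j\le n-3$. You flag the possibility that the link $Z_j$ (equivalently $S_x$) carries a codimension-$2$ stratum as ``the main obstacle'' and sketch an open-ended iteration through successive links, leaving the closure of that case to an unspecified ``quantitative compactness/rigidity argument''. The paper's proof is much shorter precisely because it argues that this obstacle does not occur: it observes that since $x\notin\Sigma^{n-2}(X)$, the argument of Lemma \ref{lem:Codim2sing} adapts to show that $S_x$ has no singular stratum of codimension $2$. (The mechanism is the same local lifting used in that lemma: a point of $\Sigma^{d_j-2}(Z_j)$ would produce, via the local product chart $B_r(0_j)\times C_{[0,r)}(Z_j)$, points of $\Sigma^{n-2}(X)$ in every neighbourhood of $x$.) With $\Sigma^{n-3}(S_x)=\emptyset$ in hand, a single application of Theorem \ref{thm:strati} to $S_x$ forces $S_x$ isometric to $\mathbb{S}^{n-1}$, hence $x\in X^{\mathrm{reg}}$.

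Two further simplifications you are missing: the paper works with the tangent sphere $S_x$ throughout rather than descending to the link $Z_j$, and it needs no induction on $n$ for the corollary itself, since the inductive content is already built into Theorem \ref{thm:strati}. Your last paragraph is thus not a technical hurdle to be overcome but a detour around a one-line observation.
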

\begin{proof}
Fix $\epsilon_n$ as in Theorem $\ref{thm:strati}$ and assume that 
\begin{equation}
\label{eq:limVol}
\lim_{r \to 0^+}\frac{\mu_g (B_r(x))}{\omega_n r^n}\geq 1-\epsilon_n.
\end{equation}
If $x$ belongs to $\Sigma^{n-2}(X)$, thanks to the definition of the tangent sphere and its metric we know that 
$$\lim_{r\rightarrow 0^+}\frac{\mu_g(B_r(x))}{\omega_n r^n}=\frac{\alpha_x}{2\pi}.$$
Therefore if (\ref{eq:limVol}) holds, we obtain $\alpha_x \geq 2\pi(1-\epsilon_n)$. 

Consider a point $x \in X \setminus \Sigma^{n-2}(X)$. Observe that the tangent sphere $(S_x,h_x)$ at $x$ is a compact stratified space of dimension $(n-1)$, and with the same argument as in Lemma $\ref{lem:RicTg}$, the metric $h_x$ satisfies $\mathrm{Ric}_{h_x}\equiv (n-2)$. Moreover, since $x$ does not belong to $\Sigma^{n-2}(X)$, Lemma \ref{lem:Codim2sing} can be easily adapted to show that $S_x$ does not carry any singular stratum of codimension $2$.  By the same argument as in the previous theorem, letting $r$ go to zero in $(B_r(x),r^{-2}g,x)$ leads to the volume estimate $\mathcal{H}^n(B_1(o))\geq (1- \epsilon_{n})\omega_{n}$ and as above we obtain: 
$$\mu_{h_x}(S_x)\geq (1-\epsilon_{n})\mathcal{H}^{n-1}(\s^{n-1}).$$ 
As a consequence, Theorem \ref{thm:strati} applied to $S_x$ shows that the tangent sphere at $x$ is isometric to $\s^{n-1}$ and $x$ belongs to the regular set. 
\end{proof}

\begin{remark}
In Corollary \ref{cor:reg}
the assumption on two side bounds on the Ricci curvature on the regular set is essential because for any $n \in \mathbb{N}_{\ge 3}$, taking $r \in (0, 1)$ which is sufficiently close to $1$, let us consider a compact $n$-dimensional stratified space $X:=[0,\pi] \times \mathbb{S}^{n-1}$ with the warped product metric $g=d\varphi^2+\sin^2(\varphi)r^2g_{\s^{n-1}}$. Then it satisfies 
\begin{itemize}
\item for all $x \in X$, $\lim_{t \to 0^+}\frac{\mu_g (B_t(x))}{\omega_nt^n}$ is close to $1$;
\item $\mathrm{Ric}_g \ge n-1$ on the regular set;
\item there is no upper bound on $\mathrm{Ric}_g$. 
\end{itemize}
In this case, all singular points of $X$ belongs to $\Sigma^{0}(X)$.
\end{remark}

%{\color{red}This is a small comment on Corollary \ref{cor:reg}. It seems to me that Corollary \ref{cor:reg} also holds even if we drop the assumptions: ``compact''  and ``angles along $\Sigma^{n-2}$ are at most $2\pi$''. The proof is as follows. Let $x \in X \setminus \Sigma^{n-2}$ and let $(C(S_x), ds^2+s^2h_x, o)$ be the tangent cone at $x$. Since Lemma \ref{lem:Codim2sing} holds even if we drop the compactness of the space, applying this yields $\Sigma ^{n-3}(S_x) = \emptyset$. On the other hand by an argument similar to the proof of Lemma \ref{lem:RicTg} we see that $\mathrm{Ric}_{h_x}\equiv (n-2)$ on $S_x^{\mbox{\tiny{reg}}}$. 
%Then the smooth convergence of the metrics $r^{-2}g$ to one of the tangent cone on the regular part yields $\mathcal{H}^n(B_1(o)) \geq (1-\epsilon_{n-1})\omega_n$. Then the same argument as in the proofs of Corollary \ref{cor:reg} and Theorem \ref{thm:strati} shows $\mu_{h_x}(S_x) \geq (1-\epsilon_{n-1})\mathcal{H}^{n-1}(\mathbb{S}^{n-1})$. Thus applying Theorem \ref{thm:strati} yields that $(S_x, d_{h_x})$ is isometric to $(\mathbb{S}^{n-1}, d_{\mathbb{S}^{n-1}})$. Thus $x$ is a regular point.
%How do you think this argument?}


\begin{thebibliography}{}

\bibitem[A18]{Ambrosio}
	\textsc{L. Ambrosio}:
	\textit{Calculus and curvature-dimension bounds in metric measure spaces},
	to appear in the Proceedings of the ICM 2018.

\bibitem[AGS14a]{AmbrosioGigliSavare13}
	\textsc{L. Ambrosio, N. Gigli, G. Savar\'e}:
	\textit{Calculus and heat flow in metric measure spaces and applications to spaces with Ricci bounds from below}.
	Invent. Math. \textbf{195} (2014), 289--391.

\bibitem[AGS14b]{AmbrosioGigliSavare14}
	\textsc{L. Ambrosio, N. Gigli, G. Savar\'e}:
	\textit{Metric measure spaces with Riemannian Ricci curvature bounded from below},
	Duke Math. J. \textbf{163} (2014), 1405--1490.
 


\bibitem[AHTP18]{AmbrosioHondaTewodrosePortegies}
	\textsc{L. Ambrosio, S. Honda, J. W. Portegies, D. Tewodrose}:
	\textit{Embedding of $\RCD^*(K, N)$-spaces in $L^2$ via eigenfunctions},
         Arxiv preprint 1812.03712.

\bibitem[AMS15]{AmbrosioMondinoSavare}
        \textsc{L. Ambrosio, A. Mondino, G. Savar\'e}:
\textit{Nonlinear diffusion equations and curvature conditions in metric measure spaces}, to appear in Mem. Am. Math. Soc.,
ArXiv preprint: 1509.07273 (2015).

\bibitem[A17]{Ambrozio}
\textsc{L.Ambrozio}
\textit{On static three-manifolds with positive scalar curvature}, J. Differential Geomemtry, 107(1), (2017), 1--45. 


\bibitem[A90]{Anderson}
		\textsc{M. T. Anderson}:
		\textit{Metrics of positive Ricci curvature with large diameter},  Manuscripta Math., 68(4), (1990), 405–415. 
	

\bibitem[Aub05]{Aubry}
        \textsc{E. Aubry}: 
        \textit{Pincement sur le spectre et le volume en courbure de Ricci positive},
        Ann. Sci. \'Ecole Norm. Sup. \textbf{38} (2005), 387–-405

          

\bibitem[B07]{Bert}
          \textsc{J. Bertrand}:
          \textit{Pincement spectral en courbure de Ricci positive.}
          Com. Math. Helv. \textbf{82} (2007), 323-352.
        
\bibitem[BKMR18]{BKMR}
		\textsc{J. Bertrand, C. Ketterer, I. Mondello, T. Richard}
		\textit{Stratified spaces and synthetic Ricci curvature bounds}, ArXiv preprint: 1804.08870. 

\bibitem[BS18]{BrueSemola}
          \textsc{E. Bru\'e, D. Semola}:
          \textit{Constancy of dimension for $\RCD^*(K,N)$ spaces via regularity of Lagrangian flows.}
          ArXiv preprint: 1803.04387 (2018), to appear in Comm. Pure and Appl. Math.

\bibitem[CM16]{CavMil}
         \textsc{F. Cavalletti, E. Milman}: \textit{The Globalization Theorem for the Curvature Dimension Condition}. 
         ArXiv preprint 1612.07623.
          

\bibitem[ChC97]{CheegerColding1}
          \textsc{J. Cheeger, T. H. Colding}:
          \textit{On the structure of spaces with Ricci curvature bounded below. I},  
         J. Differential Geom. \textbf{46},  (1997), 406--480.


\bibitem[C96]{Colding}
         \textsc{T. H. Colding}:
         \textit{Large manifolds with positive Ricci curvature.}, Invent Math. {\bf 124} (1996), 193-214.


\bibitem[CN12]{ColdingNaber}
         \textsc{T. H. Colding, A. Naber}:
         \textit{Sharp H\"older continuity of tangent cones for spaces with a lower Ricci curvature bound and
         applications}, Annals of Math. {\bf 176} (2012), 1173-1229.

\bibitem[DePhG18]{DePhilippisGigli}
         \textsc{G. De Philippis, N. Gigli}:
         \textit{Non-collapsed spaces with Ricci curvature bounded below},
         Journal de l'\'Ecole polytechnique, \textbf{5} (2018), 613--650.

\bibitem[EKS15]{ErbarKuwadaSturm}
        \textsc{M. Erbar, K. Kuwada, K.-T. Sturm}:
        \textit{On the equivalence of the entropic curvature-dimension condition and Bochner's inequality on metric measure spaces},
        Invent. Math. \textbf{201} (2015), 993--1071. 

\bibitem[F87]{Fukaya}
        \textsc{K. Fukaya}:
        \textit{Collapsing of Riemannian manifolds
and eigenvalues of Laplace operator },
         Invent. Math. \textbf{87} (1987), 517--547. 


\bibitem[G15]{Gigli1}
        \textsc{N. Gigli}:
        \textit{On the differential structure of metric measure spaces and applications},
      Mem. Amer. Math. Soc. \textbf{236} (2015), no. 1113.  
	

\bibitem[GMS13]{GigliMondinoSavare}
	\textsc{N. Gigli, A. Mondino, G. Savar\'e}:
	\textit{Convergence of pointed non-compact metric measure spaces and
	stability of Ricci curvature bounds and heat flows},
	Proceedings of the London Mathematical Society \textbf{111} (2015), 1071--1129. 

\bibitem[H09]{Honda09}
        \textsc{S. Honda}:
        \textit{Ricci curvature and almost spherical multi-suspension},
       Tohoku Math. J. \textbf{61} (2009), 499--522.

\bibitem[H15]{Honda15}
        \textsc{S. Honda}:
        \textit{Harmonic functions on asymptotic cones with Euclidean volume growth},
       J. Math. Soc. Japan \textbf{67} (2015), 69–-126.

\bibitem[H19]{Honda19}
        \textsc{S. Honda}:
        \textit{New differential operator and non-collapsed $\RCD$ spaces},
       Arxiv preprint, 190500123.
       
\bibitem[JMR16]{JMR}
        \textsc{T. Jeffres, R. Mazzeo and Y. Rubinstein}:
        \textit{Kälher-Einstein metrics with edge singularities},
       Ann. of Math. (2), \textbf{183}, (2016), 95--176. 
       
\bibitem[Kap07]{KV07}
		\textsc{V. Kapovitch}:
        \textit{Perelman's stability theorem}, Surv. Differ. Geom. Vol.XI \textbf{11} (2007), 103--136.		

\bibitem[KM19]{KM}
		\textsc{V. Kapovitch, A. Mondino}:
        \textit{On the topology and the boundary of $N$-dimensional $\RCD(K, N)$ spaces}, arXiv:1907.02614.  


\bibitem[K15a]{Ketterer}
        \textsc{C. Ketterer}:
        \textit{Cones over metric measure spaces and the maximal diameter theorem}, J. Math. Pures Appl.  \textbf{103} (2015), 1228-–1275.

\bibitem[K15b]{Ketterer2}
        \textsc{C. Ketterer}:
        \textit{Obata's rigidity theorem for metric measure spaces},   Anal. Geom. Metr. Spaces \textbf{3} (2015), 278-–295.

\bibitem[Kit17]{Kita}
        \textsc{Y. Kitabeppu}:
        \textit{A sufficient condition to a regular set of positive measure on RCD spaces}, 
        ArXiv preprint: 1708.04309 (2017), to appear in Potent. Anal.
        
\bibitem[L15]{NanLi}
		\textsc{N. Li}:
		\textit{Lipschitz-volume rigidity in Alexandrov geometry}, Adv.Math. 275 (2015), 114-146.
        
\bibitem[LS18]{LS18}
 \textsc{A. Lytchak, S. Stadler}:
       \textit{Ricci curvature in dimension 2},
       Arxiv preprint: 1812.08225.


\bibitem[LV09]{LottVillani}
\textsc{J. Lott, C. Villani}:
\textit{Ricci curvature for metric-measure spaces via optimal transport}, 
Ann. of Math. \textbf{169} (2009), 903--991.

\bibitem[M15]{MondPhD}
\textsc{I. Mondello}: 
\textit{The Yamabe problem on stratified spaces}
Ph.D. Thesis, available at HAL Id: tel-01204671 (2015).

\bibitem[M18]{Mondello}
        \textsc{I. Mondello}:
        \textit{An Obata singular theorem for stratified spaces}, 
        Trans. Amer. Math. Soc. \textbf{370} (2018), 4147--4175.

\bibitem[MN19]{MondinoNaber}
        \textsc{A. Mondino, A. Naber}:
        \textit{Structure Theory of Metric-Measure Spaces with Lower Ricci Curvature Bounds}, J. Eur. Math. Soc. \textbf{21} (2019), 1809–-1854.

\bibitem[O91]{Otsu}
        \textsc{Y. Otsu}:
        \textit{On manifolds of positive Ricci curvature with large diameter}, Math. Z. \textbf{206} (1991), 255–-264.

\bibitem[Per91]{Per}
        \textsc{G. Perelman},
\textit{Alexandrov spaces with curvatures bounded from below II,}
      preprint.

\bibitem[Per94]{Perelman}
		\textsc{G. Perelman}:
		\textit{Manifolds of positive Ricci curvature with almost maximal volume.},
		J. Amer. Math. Soc. \textbf{7} (1994), 299-305.

\bibitem[Pet99]{Petersen}
		\textsc{P. Petersen}:
		\textit{On eigenvalue pinching in positive Ricci curvature},
		Invent. Math. \textbf{138} (1999), 1--21.
		
\bibitem[St06a]{Sturm1}
	\textsc{K.-T. Sturm}:
	\textit{On the geometry of metric measure spaces, I},
	Acta Math. \textbf{196} (2006), 65--131.

\bibitem[St06b]{Sturm2}
	\textsc{K.-T. Sturm}:
	\textit{On the geometry of metric measure spaces, II},
	Acta Math. \textbf{196} (2006), 133--177.

\bibitem[T74]{Tachibana}
	\textsc{S. Tachibana}:
	\textit{A theorem on Riemannian manifolds of positive curvature operator},
	Proc. Japan. Acad. \textbf{50} (1974), 301--302.
	
\bibitem[Y96]{Yamaguchi}
	\textsc{T. Yamaguchi}:
	\textit{A convergence theorem in the geometry of {A}lexandrov spaces},
	Actes de la Table Ronde de G\'{e}om\'{e}trie Diff\'{e}rentielle, Luminy, Société Mathématique de France (1996), 601--642.
	
	

\bibitem[Vi09]{Villani}
{\sc C.~Villani}: {\it Optimal transport. Old and new}, vol.~338 of Grundlehren
  der Mathematischen Wissenschaften, Springer-Verlag, Berlin, 2009.
	
        
\end{thebibliography}
\end{document}